\documentclass{article}
%\input{prelude-kindle}
%% GET RID OF CROSSES --- messes 2up printing
%% editor can remove this line if needed

%[12pt]{amsart}
%\usepackage{mathptmx}
\usepackage{amsmath,amssymb}
\usepackage{url}
\usepackage[utf8]{inputenc}
\usepackage{array}
\usepackage{caption}
\usepackage{enumitem}

%%%%%%%%%%%%%%%%%%%%
%%  The Math. Macros
\newcommand     {\R}            {{\mathbb R}}
\newcommand     {\C}            {{\mathbb C}}

\newcommand     {\SL}[1][\Z]    {\sym{SL}(2,{#1})}
\newcommand     {\Z}            {{\mathbb Z}}

\newcommand     {\sym}[1]       {\operatorname{#1}}

\newcommand     {\abs}[1]       {{\left\lvert{#1}\right\rvert}}
\newcommand     {\kro}[2]       {{\left(\frac{#1}{#2}\right)}}
\newcommand     {\smat}[1]      {{\begin{smallmatrix}#1\end{smallmatrix}}}

\newcommand{\GL}[2]{GL(\ensuremath{#1},\ensuremath{#2})}
\newcommand{\Sp}[2]{Sp(\ensuremath{#1},\ensuremath{#2})}
\newcommand{\GSp}[2]{GSp^+(\ensuremath{#1},\ensuremath{#2})}

\renewcommand{\H}{\mathcal{H}}

\newcommand{\para}[1]{\Gamma^{\text{para}}[#1]}

\DeclareMathOperator{\disc}{\text{disc}}

\usepackage{amsthm}
\theoremstyle{plain}
\newtheorem{lemma}{Lemma}
\newtheorem{proposition}[lemma]{Proposition}
\newtheorem{theorem}[lemma]{Theorem}

\newtheorem*{conja}{Conjecture A}
\newtheorem*{conjb}{Conjecture B}
\newtheorem*{conjc}{Conjecture C}

\numberwithin{lemma}{section}

\theoremstyle{definition}

\newtheorem{remark}{Remark}

%%%%%%%%%%%%%%%%
%%  THE DOCUMENT

\title{%
  Formulas for central critical values of twisted
  L-functions attached to paramodular forms
}

\author{%
Nathan C. Ryan
\\ Department of Mathematics, Bucknell University
\\ nathan.ryan@bucknell.edu
\and
Gonzalo Tornar\'ia
\\ Centro de Matem\'atica, Universidad de la Rep\'ublica
\\ tornaria@cmat.edu.uy
}

\date{%
  \today}

\begin{document}

\maketitle

\begin{abstract}
In the 1980s B\"ocherer formulated a conjecture relating the central
values of the imaginary quadratic twists of the spin L-function attached to a
Siegel modular form $F$ to the Fourier coefficients of $F$.  This conjecture
has been proved when $F$ is a lift.  More recently, we formulated in~\cite{RyanTornaria} an
analogous conjecture for paramodular forms $F$ of prime level, even weight and in
the plus-space. \par In this paper, we
examine generalizations of this conjecture.  In particular, our
new formulations relax the conditions on $F$ and allow for twists by real
characters.  Moreover, these formulation are more explicit than the
earlier version.  We prove the conjecture in the case of lifts and
provide numerical evidence in the case of nonlifts.
\end{abstract}

\section{Introduction}\label{sec:intro}

Many problems in number theory are related to central values of
L-functions associated to modular forms,
and central values of twisted L-functions are tools
used to make progress on these problems.  In this paper we focus our
attention on paramodular forms of level $N$ and the spin
L-function associated to them.

In the 1980s a conjecture was formulated by B\"ocherer \cite{Bocherer}
that relates the coefficients of a Siegel modular form $F$ of degree 2 and
the central value of the spin L-function associated to $F$.  One
fixes a discriminant $D$ and, roughly speaking, adds up all the
coefficients of $F$ indexed by quadratic forms of discriminant $D$.  One also
computes the central value of the spin L-function twisted by the
quadratic character $\chi_D$.  The conjecture asserts that the central
value, up to a constant that depends only on $F$ (and not on $D$) and
a suitable normalization, is the square of the sum of coefficients.
In B\"ocherer's original paper \cite{Bocherer} it was proved for $F$ that are
Saito-Kurokawa lifts and later B\"ocherer and  Schulze-Pillot
\cite{BochererSchulzePillot} proved the conjecture in the case when $F$ is a Yoshida
lift.  Kohnen and Kuss \cite{KohnenKuss} gave numerical evidence in the
case when $F$ is of level one, degree 2 and is not a Saito-Kurokawa
lift (these computations have recently been extended by Raum
\cite{Raum}).  A much more general approach to the conjecture has been
pursued by Furusawa, Martin and Shalika
\cite{Furusawa,FurusawaMartin,FurusawaShalika2,FurusawaShalika3,FurusawaShalika1}.

In \cite{RyanTornaria} we investigated an analogous
conjecture in the setting of paramodular forms and our goal here is to
state some generalizations of the conjecture and to point out some
subtleties in the statement of the original conjecture.  Paramodular forms have been
gaining a great deal of attention because, for example, the most
explicit analogue of Taniyama-Shimura for abelian surfaces has been
formulated by Brumer and Kramer~\cite{BrumerKramer} and been verified
computationally by Poor and Yuen~\cite{PoorYuen}.  

We summarize the main results in \cite{RyanTornaria}.  Fix a
paramodular eigenform $F$ of level $N$ with Fourier coefficient $a(T;F)$
for each positive semidefinite quadratic form $T$.  One defines 
\[
A(D) = A_F(D) :=\frac{1}{2}\sum_{\{T>0\;:\;\disc T=D\}/\Gamma_0(N)}\frac{a(T;F)}{\varepsilon(T)}
\]
where $\varepsilon(T):=\# \{U\in\Gamma_0(N):T[U]=T\}$.  We
provided evidence for a conjecture, which can be
considered a generalization of Waldspurger's theorem
\cite{Waldspurger}.  We state a different version of that conjecture now:

\begin{conja}[Paramodular B\"{o}cherer's Conjecture]
Let $p$ be prime and let $F\in S^k(\para{p})^+$ be a paramodular Hecke eigenform of even weight $k$.
%\emph{Assume hypothesis \eqref{eq:h1} below holds (page \pageref{eq:h1}).}
Then, for fundamental discriminants $D<0$ we have
\begin{equation}\label{eq:bocherer}
  A_F(D)^2 =  \alpha_D\,  C_F \, L(F,1/2,\chi_D)\,\abs{D}^{k-1}
\end{equation}
where $C_F$ is a nonnegative constant that depends only on $F$,
and where $\alpha_D=1+\kro{D}{p}$.
Moreover, when $F$ is a Gritsenko lift we have $C_F>0$,
and when $F$ is not a lift, we have $C_F=0$ if and only if $L(F,1/2)=0$.
\end{conja}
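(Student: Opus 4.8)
The plan is to prove the conjecture unconditionally when $F$ is a Gritsenko lift, and to reduce the non-lift assertion to a refined period formula of Gross--Prasad/Ichino--Ikeda type, which is the substantive content and the reason the non-lift range is supported only by numerical evidence. Accordingly I would split the argument into (i) an exact computation in the lift case yielding \eqref{eq:bocherer} together with $C_F>0$, and (ii) a structural interpretation of $A_F(D)$ as a Bessel period in the non-lift case, from which the dichotomy $C_F=0 \iff L(F,1/2)=0$ emerges.

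For the lift case, write $F=\mathrm{Grit}(\phi)$ for a Jacobi cusp form $\phi$ of weight $k$ and index governed by $p$. The coefficients $a(T;F)$ are explicit in the Jacobi coefficients $c_\phi(n,r)$, which depend on $T$ only through $\disc T$ and residue data modulo $2p$; summing over the $\Gamma_0(p)$-orbit in the definition of $A_F(D)$ therefore collapses to a single Jacobi coefficient up to an elementary orbit factor. Thus $A_F(D)$ is, up to a $D$-independent constant, the $\abs{D}$-th coefficient of the half-integral weight form $g$ attached to $\phi$ by the Shimura--Shintani correspondence, and Waldspurger's theorem \cite{Waldspurger} gives that $\abs{c_g(\abs{D})}^2$ equals a positive constant times $L(f,1/2,\chi_D)$ and an explicit power of $\abs{D}$, where $f$ is the elliptic newform underlying $\phi$. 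Finally the spin $L$-function of a Gritsenko lift factors as $L(f,s,\chi_D)$ times two shifted Dirichlet $L$-functions; evaluating these at the centre, together with the local factor at $p$ dictated by the plus-space/level condition, produces exactly $\alpha_D=1+\kro{D}{p}$ and the power $\abs{D}^{k-1}$. Combining these identities yields \eqref{eq:bocherer} with a constant $C_F$ that is manifestly positive, the only care needed being the bookkeeping of normalizations.

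For the non-lift case the mechanism is representation-theoretic. The decisive observation is that the weighted sum $A_F(D)$, with the weights $\varepsilon(T)^{-1}$, is precisely the Bessel period of $F$ for the torus $T_D\cong\Q(\sqrt D)^\times$ against the \emph{trivial} character of the class group. By the refined Gross--Prasad/Ichino--Ikeda conjecture for $\mathrm{SO}(5)\times\mathrm{SO}(2)$ (equivalently $\mathrm{PGSp}_4$ with a Bessel model), the square of this period is proportional to the central value of the base-change of the spin $L$-function, which factors as
\[
L\bigl(1/2,\pi_{\Q(\sqrt D)}\bigr)=L(F,1/2)\,L(F,1/2,\chi_D).
\]
Hence $\abs{A_F(D)}^2=(\text{positive }F\text{-dependent const})\cdot\alpha_D\,\abs{D}^{k-1}\,L(F,1/2,\chi_D)\,L(F,1/2)$, the $F$-dependent constant absorbing $\langle F,F\rangle^{-1}$ and $L(1,\mathrm{Ad}\,F)^{-1}$, both $D$-independent. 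Isolating the $D$-independent factors gives $C_F=(\text{positive const})\cdot L(F,1/2)$, so $C_F\geq 0$ with $C_F=0$ if and only if $L(F,1/2)=0$, exactly the asserted dichotomy. The factor $\alpha_D$ arises from the local Bessel integral at $p$: when $p$ is inert in $\Q(\sqrt D)$ the torus is non-split and the local integral vanishes, matching $1+\kro{D}{p}=0$.

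The main obstacle is the non-lift case, where the displayed period identity is genuinely the refined Gan--Gross--Prasad conjecture for $\mathrm{PGSp}_4$; this is known only in special configurations (through the integral representations of Furusawa, Martin and Shalika \cite{Furusawa,FurusawaMartin} and subsequent refinements), which is why \eqref{eq:bocherer} is only verified numerically for non-lifts of prime paramodular level. Within that framework the two subsidiary difficulties are (a) identifying the arithmetically normalized sum $A_F(D)$ with the automorphic Bessel period on the nose, matching the orbit weighting $\varepsilon(T)^{-1}$ with the adelic measure, and (b) carrying out the local unramified and ramified Bessel computations at $p$ and at the primes dividing $D$, together with the archimedean integral, so that the factor $\alpha_D\,\abs{D}^{k-1}$ comes out exactly and the remaining constant is genuinely independent of $D$. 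Establishing this $D$-independence of $C_F$ is the crux: it amounts to showing that every local Bessel ratio away from the archimedean place is absorbed into $\alpha_D\,\abs{D}^{k-1}$, which is precisely what the Ichino--Ikeda-type local computations are designed to verify.
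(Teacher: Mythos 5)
Your overall architecture matches the paper's: the statement is a conjecture, and the paper's actual support for it consists of a theorem in the lift case plus numerical verification for nonlifts, which is exactly how you split the problem. Your lift-case argument is essentially the paper's proof of Theorem~\ref{thm:grit}: write $F=\mathrm{Grit}(\phi)$, collapse the average to Jacobi coefficients, invoke Waldspurger--Kohnen (where the plus-space factor $1+\kro{D}{p}$ indeed appears, giving $\alpha_D$), and use the factorization $L(F,s)=\zeta(s+1/2)\,\zeta(s-1/2)\,L(f,s)$. One slip, though: your claim that the orbit sum collapses ``up to a $D$-independent constant'' to the coefficient $c_g(\abs{D})$ is false as stated. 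The paper's Proposition~\ref{prop:avg_lift} gives $B_F(D)=c^*(D)\,h(D)/w_D$, and the class-number factor $h(D)/w_D$ certainly depends on $D$. The identity only balances because the central values of the two shifted Dirichlet $L$-functions, evaluated via the class number formula (Proposition~\ref{prop:centralvalue_lift}), contribute $\frac{4\pi^2}{w_D^2}\frac{h(D)^2}{\sqrt{\abs{D}}}$, whose $h(D)^2/w_D^2$ exactly absorbs the square of the orbit factor. This cancellation is the one genuinely delicate piece of bookkeeping in the lift case, and your write-up elides it in a way that, taken literally, would leave mismatched powers of $h(D)$ on the two sides.

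For nonlifts, your Bessel-period interpretation and the reduction to the refined Gross--Prasad/Ichino--Ikeda conjecture is not what the paper does --- the paper offers only numerical evidence (Theorem~\ref{thm:comp}) --- but it is consistent with, and in fact explains, the paper's Conjecture~C: specializing $\ell=1$ there yields $C_F=k_F\,L(F,1/2)$, which is precisely the dichotomy $C_F=0\iff L(F,1/2)=0$ that your period heuristic predicts, and your account of $\alpha_D$ via vanishing of the local Bessel integral at inert $p$ matches the paper's observation that $\kro{D}{p}=-1$ forces both an empty sum and sign $-1$ in the functional equation. Just be clear that this part is a reduction of one conjecture to another (open at the time) conjecture, not a proof; presented as such, it adds a structural explanation the paper itself does not spell out.
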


This conjecture corrects a defect of the corresponding
conjecture in \cite{RyanTornaria}, the defect being that it might be
wrong in the case of nonlifts since the conjecture in
\cite{RyanTornaria} requires the constant be positive.
It is a
theorem in \cite{RyanTornaria} that $C_F>0$ when $F$ is a Gritsenko
lift and we know that $C_F>0$ in all the examples of nonlifts that we
computed.
A minor difference is that the formula here uses the factor
$\alpha_D$ instead of the factor $\star$ in the previous version of
the conjecture.  Though $\alpha_D$ here can vanish (while $\star$
could not), this does not affect the conjecture since
$\kro{D}{p}=-1$ implies $A(D)$ is an empty sum and $L(F,1/2,\chi_D)=0$ because
in this case the functional equation has sign $-1$.
However, this factor $\alpha_D$ will be essential in the case of composite levels.

\begin{remark}We make the simple observation that if Conjecture~A is
  true and if $F$ is a nonlift for which $L(F,1/2)=0$, then the average
  $A(D)$ of its Fourier coefficients would be zero for all $D$.  This is a
  step in characterizing the kinds of forms that might violate the
  conjecture as stated in \cite{RyanTornaria}.
\end{remark}

\begin{remark}Our Conjecture~A has a noteworthy difference from the
  original version of the conjecture first stated by B\"ocherer in
  \cite{Bocherer}.  Namely, in his conjecture the constant $C_F$ is
  required to be positive while we only require that ours be
  nonnegative, though we do characterize exactly when the constant is
  zero.  This was subsequently addressed in a limited way in
  \cite{BochererSchulzePillot}, where the conjecture for Siegel
  modular forms of level $N$ was considered.

%In the setting of classical Siegel modular forms of
%%   level 1, the setting in which B\"ocherer stated his conjecture, the
%%   first nonlift has weight 20.  Since the sign of this form's
%%   functional equation is positive, it is very unlikely (and would be a
%%   deep theorem to prove this) that $L(F,1/2)$ could ever be zero for a
%%   nonlift.
%%   Were the value of $L(F,1/2)$ to vanish for some nonlift $F$,
%%   then we would expect the constant in Böcherer's original
%%   conjecture to be zero, as predicted by our Conjecture~C below.
%% %  If the analogue of our Conjecture~C is correct in the
%% %  setting of Siegel modular forms of level 1, then the constant in
%% %  B\"ocherer's original conjecture in the general case may be zero precisely in the case when
%% %  the central value of $L(F,s)$ vanishes.
\end{remark}

\begin{remark} In order to verify Conjecture~A in a case where $F$ is
  a nonlift and $L(F,1/2)=0$, one would have to compute the Fourier
coefficients of  a paramodular
form whose L-function vanishes to even order greater that zero.  Here we note that if
the Paramodular Conjecture holds, there should be such a paramodular
of, for example, levels 3319, 3391, 3571, 4021, 4673, 5113, 5209,
5449, 5501, 5599  since there is an hyperelliptic curve for each of these conductors
whose Hasse-Weil L-function vanishes to even order at least
two~\cite{Stoll}.  This last assertion
about the order of vanishing was verified
by directly computing the central
value of these L-functions using $\text{lcalc}$ \cite{lcalc}.
% whose L-series was computed based on tables of Stoll~\cite{Stoll}.
\end{remark}

\subsection{Two surprises}

After carrying out the computations used to verify the conjecture in \cite{RyanTornaria}, we made two
observations that we describe now.  We asked about what happens
if you do not restrict the computations to forms in the plus space.
To do this, we first noticed that the two sides of \eqref{eq:bocherer}
do indeed make sense.  We also carried out a simple computation
\cite[Section 4]{RyanTornaria} that
shows the averages $A(D)$ add to zero when a form is in the minus
space.  Undaunted, we carried out the computations and tabulated the
following data for $F_{587}^{-}$ (see Table~\ref{tbl:egs} for a list
of all the examples considered in this paper).
\begin{center}
\begin{tabular}{c|ccc ccc ccc ccc}
    $D$
   %&  -3
    &  -4 &  -7 & -31
    & -40 & -43 & -47
   %& -51 & -55 & -59
   %& -67 & -68 & -79
    \\\hline
    $L_D/L_{-3}$
   %&   1
    &   1.0 &   1.0 &   4.0
    &   9.0 & 144.0 &   1.0
   %&   4.0 &   9.0 &  16.0
   %& 144.0 &   4.0 &   0.0
\end{tabular}
\end{center}
Here $L_D := L(F_{587}^{-},1/2,\chi_D)\cdot\abs{D}$ and the table
shows fundamental discriminants for which $\kro{D}{587} = -1$.
The obvious thing to notice is that the numbers in the table appear to be squares and so the natural question to
ask is: squares of what?

This first experiment was a natural extension of our previous work in
\cite{RyanTornaria} as we had a paramodular form to compute the
righthand side of \eqref{eq:bocherer} and both sides of the equation
make sense.  Emboldened by the results of the first experiment, we
decided to change another hypothesis in the conjecture: we decided
to look at the case when $D>0$.  This is somewhat unnatural as the
sum $A(D)$ is an empty sum in this case.
Nevertheless we get the following data for $F_{277}$.
\begin{center}
\begin{tabular}{c|ccc ccc ccc ccc}
    $D$
   %&  1
    & 12 & 13 & 21
    & 28 & 29 & 40
   %& 41 & 57 & 69
   %& 76 & 85 & 88
    \\\hline
    $L_D/L_{1}$
   %&    1
    &  225.0 &  225.0 &  225.0
    &  225.0 & 2025.0 &  900.0
   %&  225.0 &  900.0 &  900.0
   %&    0.0 &  900.0 &  900.0
\end{tabular}
\end{center}
Here $L_D := L(F_{277},1/2,\chi_D)\cdot\abs{D}$ and $\kro{D}{277}=+1$.  Again, these seem
to be squares, but squares of what? (Also, the observant reader may
have noticed that all these squares are divisible by $15^2$.  See
Section~\ref{sec:torsion} for more about this.)

In Section~\ref{sec:conj} we will show how to define, for an auxiliary
discriminant $\ell$, a twisted average $B_\ell(D)$.
When $\ell$ is properly chosen, the squares of $B_\ell(D)$ are exactly
the squares we see in the previous two tables.

%Let $\ell$ and $D$ be fundamental discriminants such that $\Delta=\ell D<0$, and put
Given a discriminant $\Delta$, we put
\[
   \alpha_\Delta := \prod_{p\mid N} \left(1+\kro{\Delta_0}{p}\right) \;,
\]
where $\Delta_0$ is the fundamental discriminant associated to $\Delta$.

\begin{conjb}
Let $N$ be squarefree.  Suppose $F\in S^k(\para{N})$ is a Hecke
eigenform and not a Gritsenko lift.
Let $\ell$ and $D$ be fundamental discriminants
such that $\ell D<0$.
%  Let
%$\Delta=\ell D<0$ and $\Delta_0$ be the fundamental discriminant that
%corresponds to $\Delta$.
Then
\begin{equation*}%\label{eq:final}
B_{\ell,F}(D)^2=
%\prod_{p\mid N}\left(1+\kro{\Delta_0}{p}\right)
\alpha_{\ell D} \, C_{\ell,F} \, L(F,1/2,\chi_D) \, \abs{D}^{k-1}
\;,
\end{equation*}
where $C_{\ell,F}$ is a constant independent of $D$.
Moreover, $C_{\ell,F}=0$ if and only if $L(F,1/2,\ell)=0$.
% Then, for nonzero constants $k_F^\pm$ that depend only on the form $F$,
% \begin{equation}\label{eq:main}
% 4\star_\ell\,\star_D\,B_\ell(D)^2 = \frac{\abs{D\ell}^{k-1}\cdot
%   L(F,1/2,\chi_\ell) \cdot L(F,1/2,\chi_D)}{k_F^+k_F^-}\cdot \phi_{\ell,D}
% \end{equation}
% where $\ell D<0$ is a fundamental discriminant and a square mod $4N$
% and where
% \[
% \phi_{\ell, D} = \prod_{p\text{ prime}} \left(1+\kro{\ell D}{p}\right
% ),
% \, \star_\ell = 2^{\omega(\gcd(N,\ell))},\,\text{ and }   \star_D = 2^{\omega(\gcd(N,D))}
% \]
\end{conjb}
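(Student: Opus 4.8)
Since $F$ is not a Gritsenko lift, its spin $L$-function is (generically) an irreducible degree-four Euler product, so the identity cannot be reduced to Waldspurger's theorem for a single $\mathrm{GL}_2$-form as in the lift case; the natural route is instead through Bessel periods and the refined Gan--Gross--Prasad philosophy. The plan is to recognize $B_{\ell,F}(D)$ as a global Bessel period of $F$. Because $\ell D<0$, the integer $\ell D$ is the discriminant of an imaginary quadratic field $E=\Q(\sqrt{\ell D})$; the positive forms $T$ with $\disc T=\ell D$ fall into finitely many $\Gamma_0(N)$-orbits indexed by the class group of $E$, and the factorization $\ell D=\ell\cdot D$ into fundamental discriminants defines a genus character $\Lambda$ of $E$. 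First I would show that the twisted average $B_{\ell,F}(D)$ of Section~\ref{sec:conj} is, up to the elementary weights $\eps(T)$, the global Bessel period $B_{\Lambda}(F)$ attached to $(E,\Lambda)$ --- that is, the $\Lambda$-twisted sum over the class group of $E$ of the Fourier coefficients $a(T;F)$ with $\disc T=\ell D$.

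Second, I would invoke the refined Gan--Gross--Prasad / Ichino--Ikeda identity for the pair $(\mathrm{SO}_5,\mathrm{SO}_2)\cong(\mathrm{PGSp}_4,T_E)$, in the form established by Liu and by Furusawa--Morimoto, which gives
\begin{equation*}
  \abs{B_{\Lambda}(F)}^2
  = \frac{L(1/2,\pi_F\times\Lambda)}{\langle F,F\rangle}\,\prod_{v} J_v\;,
\end{equation*}
where the $J_v$ are normalized local Bessel integrals equal to $1$ for almost all $v$. Because $\Lambda$ is a genus character associated to the factorization $\ell D=\ell\cdot D$, its automorphic induction is the isobaric sum $\chi_\ell\boxplus\chi_D$, and the degree-eight Rankin--Selberg value factors as $L(1/2,\pi_F\times\Lambda)=L(F,1/2,\chi_\ell)\,L(F,1/2,\chi_D)$; this already isolates the explicit factor $L(F,1/2,\chi_D)$ and places the remaining $\ell$-dependent $L$-value into a constant. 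At the archimedean place and at primes $v\nmid N$ the integrals $J_v$ are standard and contribute the power $\abs{D}^{k-1}$ together with $D$-independent constants. Collecting these with $L(F,1/2,\chi_\ell)$ and the Petersson norm into $C_{\ell,F}$ yields the asserted shape, and the vanishing criterion then reads off directly: $C_{\ell,F}=0$ iff $L(F,1/2,\ell)=0$, provided the local factors are nonzero, which is exactly the non-vanishing half of Gan--Gross--Prasad.

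The decisive and most delicate step is the computation of $J_v$ at the primes $v\mid N$, where the squarefree hypothesis enters through the paramodular (rather than spherical) local structure. Here one must evaluate the local Bessel integral for the paramodular newvector and show that it equals, up to a nonzero $D$-independent constant, the factor $1+\kro{\ell D}{p}$; this reproduces $\alpha_{\ell D}=\prod_{p\mid N}\bigl(1+\kro{(\ell D)_0}{p}\bigr)$ and, in particular, explains its vanishing when some $p\mid N$ is inert in $E$ (matching the sign of the relevant functional equation). I expect this local paramodular Bessel computation --- together with verifying that the local Bessel functionals at $v\mid N$ are nonzero, so that the ``only if'' direction of the vanishing criterion survives --- to be the main obstacle, since it requires explicit control of the ramified local representation $\pi_{F,p}$ and its Bessel model rather than a formal appeal to the global identity.
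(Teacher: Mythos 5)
You have attempted to prove a statement that the paper itself does not prove: Conjecture~B is left as a conjecture, supported only by the numerical verification of Theorem~\ref{thm:comp} for six specific nonlift forms and, outside the conjecture's hypotheses, by Theorem~\ref{thm:grit} for Gritsenko lifts with $\ell=1$. So there is no paper proof to match, and the question is whether your argument stands on its own. It does not: it is a research program rather than a proof, and you concede as much. The entire weight of the claim rests on steps you defer --- the evaluation of the local Bessel integrals $J_v$ at $v\mid N$ for the paramodular newvector (which is supposed to produce the factor $1+\kro{\ell D}{p}$ and hence $\alpha_{\ell D}$), and the nonvanishing of those local functionals, without which the ``only if'' half of the vanishing criterion $C_{\ell,F}=0 \Leftrightarrow L(F,1/2,\chi_\ell)=0$ collapses. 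These ramified paramodular computations are precisely the hard content; asserting that they ``should'' yield $\alpha_{\ell D}$ is not an argument, especially since the vanishing of $\alpha_{\ell D}$ has a competing elementary explanation (the set $\mathcal{Q}_{N,\ell D}$ is empty when some $p\mid N$ is inert, as the paper's remark in Section~\ref{sec:conj} notes), and one must show the local integral reproduces exactly this factor and no extra $D$-dependence.

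Two further gaps deserve naming. First, the global input you invoke --- the refined Gan--Gross--Prasad identity for $(\mathrm{SO}_5,\mathrm{SO}_2)$ in the form of Liu and Furusawa--Morimoto --- requires temperedness of $\pi_F$, which is expected but not established for a general nonlift paramodular eigenform; moreover the versions available apply to Bessel periods with respect to a Hecke character $\Lambda$ of $E=\Q(\sqrt{\ell D})$, and you never actually verify the identification $B_{\ell,F}(D)=B_\Lambda(F)$: this requires matching the $\Gamma_0(N)$-orbit structure of $\mathcal{Q}_{N,\ell D,\rho}$ (with the weights $1/\eps(T)$, the factor $\tfrac12$, and the sum over $\rho\in R_{\ell D}$) with a class-group sum, including the case where $\ell D$ is \emph{not} fundamental (e.g.\ $\ell=-4$, $D=8$), where the relevant order is nonmaximal and the paper's genus character $\chi_\ell$ of Section~\ref{ssec:genus char} involves the correction $\tilde T=[Na/g,b,cg]$ that your sketch ignores. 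Second, the archimedean and unramified computations that are to produce $\abs{D}^{k-1}$ (note the conjecture allows odd $k$ and arbitrary Atkin--Lehner signs, where $A_F(D)$ vanishes identically by Lemma~\ref{lem:AD} and only the refined $B_{\ell}$ survives) are likewise asserted, not done. In short: you have correctly located the machinery by which a statement of this type could be attacked --- and indeed its specialization $\ell=1$ is the route by which B\"ocherer-type formulas were later proved --- but as written the proposal proves nothing beyond what the paper already conjectures.
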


The notation in this conjecture is further explained  in Section~\ref{sec:conj}
but the analogy with Conjecture~A will be made clear now.  First, note
that $B_\ell(D)$ is a twisted average of the Fourier coefficients of
$F$ indexed by quadratic forms of discriminant $D$.
Essentially, if $k$ is even, $N$ is prime, and $\ell=1$ then
$B_\ell(D)=\abs{A(D)}$ so we recover Conjecture~A.% .

In a later section, we are interested in verifying Conjecture~B in
the case of nonlifts.  To do this, we attempt to understand the constant
$C_{\ell,F}$ a little better.  In Conjecture~B, one can think of the
discriminant $\ell$ as being fixed.  In this next conjecture, we think
of it as a parameter.

\begin{conjc}
Let $N$ be squarefree.  Suppose $F\in S^k(\para{N})$ is a Hecke
eigenform and not a Gritsenko lift. 
Let $\ell$ and $D$ be fundamental discriminants such that $\ell D<0$.
Then
\begin{equation*}
B_{\ell,F}(D)^2=
%\prod_{p\mid N}\left(1+\kro{\Delta_0}{p}\right)
\alpha_{\ell D}
\, k_F
\, L(F,1/2,\chi_\ell) \, L(F,1/2,\chi_D)
\, \abs{D\ell}^{k-1}
\end{equation*}
for some positive constant $k_F$ independent of $\ell$ and $D$.
% Then, for nonzero constants $k_F^\pm$ that depend only on the form $F$,
% \begin{equation}\label{eq:main}
% 4\star_\ell\,\star_D\,B_\ell(D)^2 = \frac{\abs{D\ell}^{k-1}\cdot
%   L(F,1/2,\chi_\ell) \cdot L(F,1/2,\chi_D)}{k_F^+k_F^-}\cdot \phi_{\ell,D}
% \end{equation}
% where $\ell D<0$ is a fundamental discriminant and a square mod $4N$
% and where
% \[
% \phi_{\ell, D} = \prod_{p\text{ prime}} \left(1+\kro{\ell D}{p}\right
% ),
% \, \star_\ell = 2^{\omega(\gcd(N,\ell))},\,\text{ and }   \star_D = 2^{\omega(\gcd(N,D))}
% \]
\end{conjc}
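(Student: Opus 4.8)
The plan is to deduce Conjecture~C from Conjecture~B. Comparing the two statements, one sees that Conjecture~C says nothing more than that the constant $C_{\ell,F}$ of Conjecture~B factors explicitly as
\[
   C_{\ell,F} = k_F \, L(F,1/2,\chi_\ell)\,\abs{\ell}^{k-1},
\]
with $k_F>0$ depending only on $F$; substituting this into Conjecture~B reproduces Conjecture~C verbatim. So the entire content is to pin down the dependence of $C_{\ell,F}$ on the auxiliary discriminant $\ell$. A direct, unconditional attack would require a Waldspurger-type identity for the relevant theta lift of $F$, which is exactly what is unavailable for nonlifts (and is the reason Conjecture~B itself is open in this range); I therefore work conditionally on Conjecture~B and extract the factorization from a symmetry of the twisted average $B_{\ell,F}(D)$ under interchanging the two fundamental discriminants $\ell$ and $D$.

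First I would unwind the definition of $B_{\ell,F}(D)$ from Section~\ref{sec:conj}. Since it is a genus-character twisted average of the Fourier coefficients $a(T;F)$ over $\Gamma_0(N)$-classes of quadratic forms $T$ of discriminant $\ell D$, with the twist given by the genus character attached to the \emph{unordered} factorization $\ell D = \ell\cdot D$, I expect the definition to yield
\[
   \abs{B_{\ell,F}(D)} = \abs{B_{D,F}(\ell)} \;.
\]
Establishing this identity cleanly is the \emph{main obstacle}. One must check that every ingredient is invariant under the interchange $\ell\leftrightarrow D$: the normalizing powers of $\abs{D}$ and $\abs{\ell}$, the orbit counts $\varepsilon(T)=\#\{U\in\Gamma_0(N):T[U]=T\}$, and above all the genus-character values, which requires the reciprocity $\chi_{\ell,D}=\chi_{D,\ell}$ together with a bijection of the relevant classes of forms. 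The hypotheses of Conjecture~B are themselves symmetric in $\ell$ and $D$ (both fundamental, $\ell D<0$, and $F$ a non-Gritsenko-lift eigenform of squarefree level $N$), so once the symmetry of $B$ is in hand the rest of the argument is formal.

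Granting the symmetry, I would apply Conjecture~B in both orderings, obtaining $B_{\ell,F}(D)^2=\alpha_{\ell D}\,C_{\ell,F}\,L(F,1/2,\chi_D)\,\abs{D}^{k-1}$ and $B_{D,F}(\ell)^2=\alpha_{\ell D}\,C_{D,F}\,L(F,1/2,\chi_\ell)\,\abs{\ell}^{k-1}$, where I have used $\alpha_{\ell D}=\alpha_{D\ell}$. Since $B_{\ell,F}(D)^2=B_{D,F}(\ell)^2$, cancelling and rearranging yields
\[
   \frac{C_{\ell,F}}{L(F,1/2,\chi_\ell)\,\abs{\ell}^{k-1}}
   = \frac{C_{D,F}}{L(F,1/2,\chi_D)\,\abs{D}^{k-1}}
\]
whenever the relevant central values are nonzero. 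The left-hand side depends only on $\ell$ and the right-hand side only on $D$, so both equal a common constant $k_F$ depending only on $F$; the remaining cases, in which a central value vanishes, are covered by the vanishing clause of Conjecture~B, both sides then being zero. Substituting $C_{\ell,F}=k_F\,L(F,1/2,\chi_\ell)\,\abs{\ell}^{k-1}$ back into Conjecture~B gives the formula of Conjecture~C. Finally, positivity of $k_F$ follows because each $C_{\ell,F}$ is nonnegative and, choosing any $\ell$ with $L(F,1/2,\chi_\ell)\neq 0$—which exists by the nonvanishing of quadratic twists—the ``if and only if'' clause of Conjecture~B forces $C_{\ell,F}>0$, whence $k_F>0$.
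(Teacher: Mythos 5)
The statement you are trying to prove is Conjecture~C, and the paper does not prove it: for nonlifts it is supported only by the numerical verification recorded in Theorem~\ref{thm:comp} (six specific forms, finitely many discriminants), and for Gritsenko lifts the paper notes the formula is vacuously true when $\ell\neq 1$ and $D\neq 1$ because both sides vanish (Propositions~\ref{prop:avg_lift} and~\ref{prop:centralvalue_lift}). The only logical implication the paper records runs in the opposite direction from yours: Conjecture~C implies Conjecture~B by setting $C_{\ell,F}=k_F\,L(F,1/2,\chi_\ell)\,\abs{\ell}^{k-1}$. Your proposal is therefore a genuinely different route --- a conditional reduction of C to B via the symmetry $B_{\ell,F}(D)=B_{D,F}(\ell)$, which the paper does assert (as a consequence of quadratic reciprocity) in Section~\ref{sec:conj}. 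As a piece of reasoning this is a reasonable and even illuminating observation, but it cannot be called a proof of the statement, since Conjecture~B is exactly as open as Conjecture~C for nonlifts; at best you have shown the two conjectures are closely tied together, which the paper already signals.

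Beyond that structural point, your separation-of-variables step has a concrete gap. The identity $C_{\ell,F}/\bigl(L(F,1/2,\chi_\ell)\abs{\ell}^{k-1}\bigr)=C_{D,F}/\bigl(L(F,1/2,\chi_D)\abs{D}^{k-1}\bigr)$ is only obtained for pairs $(\ell,D)$ with $\ell D<0$, with $\alpha_{\ell D}\neq 0$ (i.e.\ $\kro{\ell D}{p}\neq -1$ for all $p\mid N$; when $\alpha_{\ell D}=0$ both sides of B are trivially zero and you learn nothing), and with both central values nonzero. To conclude that the ratio is a single constant $k_F$ you must connect all admissible $\ell$ through a chain of such pairs, which requires the existence of fundamental discriminants $D$ in prescribed sign and congruence classes modulo the primes dividing $N$ for which $L(F,1/2,\chi_D)\neq 0$. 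For the degree-four spin L-function of a nonlift paramodular form no such nonvanishing theorem is available (your appeal to ``the nonvanishing of quadratic twists'' is itself an open problem here), so even the conditional implication B $\Rightarrow$ C is not complete as written. The same unproved nonvanishing is what your positivity argument for $k_F$ rests on.
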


This gives us a very explicit statement of a conjecture for forms that
are not Gritsenko lifts.  It is this formula that we verify in
Section~\ref{sec:nonlifts}.  We observe that
Conjecture~C implies Conjecture~B by letting
$C_{\ell,F}=k_F \, L(F,1/2,\chi_\ell)\,\abs{\ell}^{k-1}$.

We note that when $F$ is a Gritsenko lift the formula of Conjecture~B
is valid in the case $\ell=1$ with $C_{\ell,F}>0$, as shown in
Theorem~\ref{thm:grit} below;
the formula of
Conjecture~C is valid provided $\ell\neq 1$ and $D\neq 1$,
but uninteresting
with both sides being zero for trivial reasons
(see Proposition~\ref{prop:avg_lift} and  Proposition~\ref{prop:centralvalue_lift}).
\\
%XXX add a reference to section \ref{sec:lifts},
%mention Proposition~\ref{prop:avg_lift} (for the LHS, since $\ell\neq 1$ and $D\neq 1$)
%and Proposition~\ref{prop:centralvalue_lift} (for the RHS, since either $\ell>1$ or $D>1$)

\subsection{Notation}  
The main objects of study in this paper are paramodular forms of 
level $N$ and their L-functions.

Suppose $R$ is a commutative ring with identity. The symplectic group
is $\Sp{4}{R}:=\{x\in \GL{4}{R}:  x' J_2 x = J_2\}$,
where the transpose of matrix $x$ is denoted $x'$ and for the $n
\times n$ identity matrix $I_n$ we set
$J_n = \left(\begin{smallmatrix}
0 & I_n\\-I_n&0
\end{smallmatrix}\right)$.
When $R\subset \R$, the group of symplectic similitudes
is $\GSp{4}{R} := \{x\in\GL{4}{R}: \exists \mu\in\R_{>0}: x' J_2 x =
\mu J_2\}$.

The paramodular group of level $N$ is
\begin{equation*}%\label{eq:para}
\para{N} := \Sp{4}{\mathbb{Q}}\cap 
\begin{pmatrix} 
* &* & */N &*\\
N* & * &*& *\\
N*& N*& * & N*\\
N* & * & * & *
\end{pmatrix},
\text{ where $*\in\Z$.}
\end{equation*}

Paramodular forms of degree 2, level $N$ and weight $k$ are modular
forms with respect to the group $\para{N}$.  We denote the space of
such modular forms by $M^k(\para{N})$ and the space of cuspforms by
$S^k(\para{N})$.  The space $S^k(\para{N})$ can be split into a plus
space and a minus space according to the action of the Atkin-Lehner
operator $\mu_N$: in particular, $S^k(\para{N})^\pm =\{f\in S^k(\para{N}): f\mid \mu_N = \pm f\}$.  

Every $F\in M^k(\para{N})$ has a Fourier expansion of the form
\[
F(Z) = \sum_{T=[Na,b,c]\in \mathcal{Q}_N} a(T;F)\,q^{Na}\,\zeta^b\,{q'}^{c}
\]
where $q := e^{2\pi i z}$, $q':=e^{2\pi i z'}$ ($z,z'\in \H_1$), $\zeta := e^{2 \pi i \tau}$ ($\tau\in\C$)
and
%for the set of positive semidefinite half-integral matrices $\mathcal{X}_2^{\text{semi}}$,
\[
\mathcal{Q}_N := \left\{[Na,b,c]
\geq 0
\;:\;
a,b,c\in\Z
\right\};
\]
here we use Gauss's notation for binary quadratic forms.

We will want to decompose $\mathcal{Q}_N$ by discriminant $D< 0$ so we also
define
\[
\mathcal{Q}_{N,D} = \left\{ T\in \mathcal{Q}_N: \disc T = D\right\}
.
\]
This is useful, for example, so that we can write
\[
A_F(D):=\frac{1}{2}\sum_{T\in \mathcal{Q}_{N,D}/\Gamma_0(N)}\frac{a(T;F)}{\varepsilon(T)}
\;.
\]

For $F\in S^k(\para{N})$, we have $a(T[U];F)=a(T;F)$ for
every $U\in\Gamma_0(N)$,
where $\Gamma_0(N)$ is the
congruence subgroup of $\SL$ with lower lefthand entry congruent to 0
mod $N$, and $a(T[\smat{1&0\\0&-1}];F)=(-1)^k\,a(T;F)$.
Moreover, cusp forms are supported on the positive definite
matrices in $\mathcal{Q}_N$.

%XXX Need to define $W_N$

%\subsection{$L$-function Notation}

%We follow the notation of \cite{Dokchitser} for the $L$-function notation and \cite{Andrianov} for the Hecke operator notation.

%Following Andrianov\cite{Andrianov}, one can define operators $T(n)$
%in terms of double cosets.  Specifically, using \cite{Ibukiyama}, one
%can define the action of the operator $T(q^\delta)$ for
%$S^k(\para{p})$ for primes $(p,q)=1$.
% as follows:
%\begin{multline*}
%a\left(\left(\begin{smallmatrix} a&b/2\\b/2&c\end{smallmatrix}\right); F|T(q^\delta)\right):=\\
%\sum_{\substack{\alpha,\beta,\gamma\in\Z\\ \alpha+\beta+\gamma =\delta\\ \alpha,\beta,\gamma\geq 0}} \sum_{\substack{u\in R(q^\beta)\\ a_u\equiv 0\pmod{q^{\beta+\gamma}}\\b_u,c_u\equiv 0\pmod{q^\gamma}}}  a\left(q^\alpha \left(\begin{smallmatrix} a_uq^{-\beta-\gamma} & b_u q^{-\gamma}/2\\ b_uq^{-\gamma}/2 & c_u q^{\beta-\gamma} \end{smallmatrix}\right);F\right)
%\end{multline*}
%where $\left(\begin{smallmatrix} a_u & b_u/2 \\ b_u/2 &
%c_u\end{smallmatrix}\right) := u'\left(\begin{smallmatrix}
%a&b/2\\b/2&c\end{smallmatrix}\right)u$ and $R(q^b)$ is any lift of
%$\mathbb{P}(\Z/q^b\Z)$ under the map $\left(\begin{smallmatrix} u_1 &
%*\\u_2&*\end{smallmatrix}\right)\mapsto (u_1,u_2)$.

%TODO: define atkin lehner

Suppose we are given a paramodular form $F\in S^k(\para{N})$ so that
for all Hecke operators $T(n)$, $F|T(n) = \lambda_{F,n}F=\lambda_n F$.  Then we can
define the spin L-series by the Euler product
\begin{equation*}\label{eq:spin}
L(F,s) := \prod_{\text{$q$ prime}} L_q\bigl(q^{-s-k+3/2})^{-1},
\end{equation*}
where the local Euler factors are given by
\[
L_q(X) := 1 - \lambda_q X + (\lambda_q^2-\lambda_{q^2}-q^{2k-4}) X^2
            - \lambda_q q^{2k-3} X^3 + q^{4k-6} X^4
\]
for $q\nmid N$, and has a similar formula but of lower degree for
$q\mid N$.

The Paramodular Conjecture \cite{BrumerKramer,PoorYuen} asserts that
the L-function of a para\-modular form is the same as the L-function
of an associated abelian surface.
In all the examples we consider, the paramodular forms have
corresponding abelian surfaces isogenous to Jacobians of hyperelliptic
curves that can be found in tables of Stoll \cite{Stoll};
thus we compute hyperelliptic curve L-functions when we carry out our computations.
A table in \cite{Dokchitser} summarizes the data that we use to write
down the functional equation of the L-function of an hyperelliptic
curve: 
\begin{equation*}%\label{eq:functional}
L^*(F,s) = \left(\frac{\sqrt{N}}{4\pi^2}\right)^s\Gamma(s+1/2)\Gamma(s+1/2)L(F,s).
\end{equation*}
so that conjecturally
\[
L^*(F,s) = \epsilon\, L^*(F,1-s),
\]
when $F\in S^2(\para{N})^\epsilon$.

Let $D$ be a fundamental discriminant,
and denote by $\chi_D$ the
unique qua\-dratic character of conductor $D$.  For the spin
L-series $L(F,s) = \sum_{n\geq 1} a(n)\,n^{-s}$ of a paramodular form
$F$, we define the quadratic twist
\[
L(F,s,\chi_D) := \sum_{n \geq 1} \chi_D(n)\,a(n)\,n^{-s},
\]
which is conjectured to have an analytic continuation and satisfy a
functional equation. Suppose $N$ is squarefree, let $N'=ND^4/\gcd(N,D)$,
and define
\[
L^*(F,s,\chi_D) :=\left(\frac{\sqrt{N'}}{4\pi^2}\right)^s\Gamma(s+1/2)\,\Gamma(s+1/2)\,L(F,s,\chi_D)
\]
so
that assuming standard conjectures, 
\[
L^*(F,s,\chi_D)=\epsilon'\,L^*(F,s,\chi_D).
\]
The global root number $\epsilon'$ of the functional equation for $L(F,s,\chi_D)$ is
given in terms of the local root numbers $\epsilon_p$ of $L(F,s)$ by the following lemma \cite{SchmidtNotes}.

%In particular, the sign of the functional equation for $L(F,s,\chi_D)$
%\[
%L^*(F,s,\chi_D)=\epsilon'\,L^*(F,s,\chi_D)
%\]
%is given by
%\[
%\epsilon' = (-1)^k\,\chi_D(N_0)\prod_{p\mid N_0} \epsilon_p.
%\]
%A twist changes the sign of the functional equation and the conductor
%of the L-function in the following explicit way \cite{SchmidtNotes}.

\begin{lemma}
Let $F\in S^k(\para{N})$ be a Hecke eigenform, with $N$ squarefree.
Denote the local root numbers of, respectively,
$L(F,s,\chi_D)$ and
$L(F,s)$, as follows: $\epsilon'=\prod_{p\leq \infty}
\epsilon_p'$, and $\epsilon=\prod_{p\leq \infty} \epsilon_p$.  Then
\begin{enumerate}
\item At the infinite place, $\epsilon_\infty' =\epsilon_\infty = (-1)^k$.
\item Assume $p\nmid N$, then $\epsilon_p'=\epsilon_p=+1$.
\item Assume $p\mid N$ and $p\mid D$, then $\epsilon_p'=+1$.
\item Assume $p\mid N$ and $p\nmid D$, then $\epsilon_p'=\chi_D(p)
  \epsilon_p$.
\end{enumerate}
In particular, if $N_0=N/\gcd(N,D)$,
\[
\epsilon' = \epsilon \cdot \chi_D(N_0) \cdot \prod_{p\mid\gcd(D,N)} \epsilon_p
\]
\end{lemma}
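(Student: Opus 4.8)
The plan is to use the factorization of the global root number into local root numbers, $\epsilon=\prod_{p\le\infty}\epsilon_p$ and $\epsilon'=\prod_{p\le\infty}\epsilon_p'$, and to compare $\epsilon_p'$ with $\epsilon_p$ one place at a time. I regard $L(F,s)$ and $L(F,s,\chi_D)$ as the spin $L$-functions of the automorphic representation $\pi=\otimes_v\pi_v$ attached to $F$ and of its twist $\pi\otimes\chi_D$, normalized so that the functional equation relates $s$ and $1-s$; in this normalization the degree-four spin representation is self-dual of symplectic type, so each $\epsilon_p\in\{\pm1\}$ and the determinant of the local spin representation is trivial (the product of the local Satake parameters is $1$). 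At a finite place the comparison rests on the standard behaviour of local $\epsilon$-factors under twisting: for an unramified character $\eta$ one has $\epsilon(\pi_p\otimes\eta)=\eta(\varpi)^{a(\pi_p)}\epsilon(\pi_p)$, where $a(\pi_p)$ is the conductor exponent, while for ramified $\eta$ one must use the explicit structure of $\pi_p$.

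For the archimedean place~(1) I would note that $\chi_D$, viewed as a Hecke character, restricts to the trivial character of $W_{\C}$, whereas the archimedean $L$-factor $\Gamma(s+1/2)^2$ corresponds to a representation of $W_{\R}$ induced from $W_{\C}$. Since $\mathrm{Ind}_{W_{\C}}^{W_{\R}}(\psi)\otimes\eta\cong\mathrm{Ind}_{W_{\C}}^{W_{\R}}(\psi\cdot\eta|_{W_{\C}})$, twisting by a character trivial on $W_{\C}$ leaves the parameter unchanged, so $\epsilon_\infty'=\epsilon_\infty$ independently of the sign of $D$; the value $(-1)^k$ is the usual weight-$k$ archimedean root number. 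For~(2), with $p\nmid N$ the representation $\pi_p$ is unramified, so $a(\pi_p)=0$ and $\epsilon_p=+1$; if moreover $p\nmid D$ the twist is unramified and $\epsilon_p'=+1$, while if $p\mid D$ I would compute $\epsilon_p'=\prod_i\epsilon(\mu_i\chi_{D,p})=\big(\prod_i\mu_i(\varpi)\big)^{a(\chi_{D,p})}\epsilon(\chi_{D,p})^{4}$ over the four unramified Satake characters $\mu_i$. Triviality of the determinant gives $\prod_i\mu_i(\varpi)=1$, and the Gauss-sum identity $\epsilon(\chi_{D,p})^2=\chi_{D,p}(-1)$ for quadratic $\chi_{D,p}$ gives $\epsilon(\chi_{D,p})^4=1$, so again $\epsilon_p'=+1$.

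For $p\mid N$ the crucial input is that, $N$ being squarefree, the paramodular level at $p$ is $1$, so by the Roberts--Schmidt characterization of paramodular vectors the conductor exponent is $a(\pi_p)=1$. In case~(4), where $p\nmid D$, the local character $\chi_{D,p}$ is unramified quadratic with $\chi_{D,p}(\varpi)=\chi_D(p)$, and the unramified-twist formula gives $\epsilon_p'=\chi_D(p)^{a(\pi_p)}\epsilon_p=\chi_D(p)\,\epsilon_p$, exactly as claimed. The main obstacle is case~(3), where $p\mid N$ and $p\mid D$: here $\chi_{D,p}$ is ramified quadratic and I must establish $\epsilon_p'=+1$. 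This forces me to invoke the explicit list of conductor-$p$ paramodular representations in the Roberts--Schmidt classification (the unramified twists of Steinberg and the other Iwahori-spherical types) and, for each type, to compute the $\epsilon$-factor of the ramified quadratic twist by reducing to Gauss-sum identities such as $\epsilon(\chi_{D,p})^2=\chi_{D,p}(-1)$; this case-by-case local computation, which is the content I would borrow from~\cite{SchmidtNotes}, is where all the real work lies.

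Finally I would assemble the product. Combining the four cases yields $\epsilon'=(-1)^k\prod_{p\mid N,\,p\nmid D}\chi_D(p)\,\epsilon_p$, while $\epsilon=(-1)^k\prod_{p\mid N}\epsilon_p$. Since $N_0=N/\gcd(N,D)$ is squarefree and collects exactly the primes $p\mid N$ with $p\nmid D$, multiplicativity of $\chi_D$ gives $\chi_D(N_0)=\prod_{p\mid N,\,p\nmid D}\chi_D(p)$. Using $\epsilon_p^2=1$, the product $\epsilon\cdot\prod_{p\mid\gcd(D,N)}\epsilon_p$ collapses to $(-1)^k\prod_{p\mid N,\,p\nmid D}\epsilon_p$, and multiplying by $\chi_D(N_0)$ recovers $\epsilon'$, which is the stated identity $\epsilon'=\epsilon\cdot\chi_D(N_0)\cdot\prod_{p\mid\gcd(D,N)}\epsilon_p$.
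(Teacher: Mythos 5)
The paper does not actually prove this lemma: it is quoted wholesale from \cite{SchmidtNotes}, with no argument given in the text. Measured against that, your proposal is a faithful reconstruction of the standard local argument, and the parts you do carry out are correct. The final bookkeeping is right: from (1)--(4) one gets $\epsilon'=(-1)^k\prod_{p\mid N,\,p\nmid D}\chi_D(p)\,\epsilon_p$, and since $\epsilon_p^2=1$ the extra factor $\prod_{p\mid\gcd(D,N)}\epsilon_p$ exactly cancels the places $p\mid\gcd(D,N)$ out of $\epsilon=(-1)^k\prod_{p\mid N}\epsilon_p$, giving the stated product formula. Cases (1), (2) and (4) are handled by genuinely standard inputs (the archimedean parameter is induced from $W_{\mathbb{C}}$, so twisting by the sign character changes nothing; unramified principal series with trivial determinant of the spin parameter plus $\epsilon(\chi_{D,p})^4=\chi_{D,p}(-1)^2=1$; conductor exponent $a(\pi_p)=1$ for paramodular level $p\parallel N$ together with the unramified-twist formula).

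The one caveat is that case (3) --- $p\mid N$ and $p\mid D$, where you must show $\epsilon_p'=+1$ for a \emph{ramified} quadratic twist of a conductor-$p$ paramodular representation --- is announced but not executed: you correctly identify that it requires running through the Roberts--Schmidt list of Iwahori-spherical paramodular types and computing each twisted $\epsilon$-factor via Gauss-sum identities, and you explicitly defer that computation to \cite{SchmidtNotes}. This is precisely where all the content of the lemma lives (note that it is the only item whose conclusion is independent of $\epsilon_p$, which is not something the generic twisting formula would predict). Since the paper itself supplies nothing beyond the same citation, your write-up matches the paper's level of rigor; but as a self-contained proof it remains an outline until that case-by-case local computation is actually carried out.
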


\section{Generalizations of the Paramodular B\"ocherer's Conjecture}\label{sec:conj}

In this section, we motivate Conjectures~B and C.  We do it by
describing what happens for particular $F$ that are not Gritsenko
lifts.  We place particular emphasis on the transition from the
hypotheses in Conjecture~A (namely, $F$ in the plus-space, of prime
level and even weight) to Conjectures~B and C which have no such
hypotheses.
% \subsection{Paramodular form examples}

% In this section and subsequent sections, we make use of 4 paramodular
% forms computed for us by David Yuen and Cris Poor using techniques in
% \cite{PoorYuen}.

% It is conjectured that these are the coefficients of Hecke eigenforms and so by the
% Paramodular Conjecture one can compute the L-functions by computing
% the L-function of the corresponding abelian surface.  This is how we
% carry out the simple computations in this section and the more
% exhaustive ones in Section~\ref{sec:nonlifts}.

\subsection{A simple case}

For $F=F_{249}$, the unique Hecke eigenform of weight 2 and level
$249=3\cdot 83$, we have $A(D)=0$ for all $D$ (see Lemma~\ref{lem:AD},
below) since its eigenvalues under the Atkin-Lehner opertors $\mu_3$
and $\mu_{83}$ are $\epsilon_3=\epsilon_{83}=-1$.
However, we have
\begin{center}
\begin{tabular}{c|ccc ccc ccc}
    $D$
   %&   -4
    &   -7 &   -8 &  -20
    &  -31 &  -35 &  -40
    &  -47 &  -56 &  -71
   %&   -3 &  -51 & -84 & -87
    \\\hline
    $L_D/L_{-4}$
   %&  1
    &  1.0 &   1.0 &  4.0
    &  1.0 &  16.0 &  4.0
    &  1.0 &   4.0 &  0.0
   %&  0.5 &  50.0 &  0.0 &  2.0
\end{tabular}
\end{center}
where $L_D := L(F_{249},1/2,\chi_D)\cdot\abs{D}$ and $\kro{D}{249}=+1$.

In this and the next section we will show where the squares in this table come from.
Before we do that, we show that for $F_{249}$ (as well as $F_{587}^-$
and $F_{713}^-$), we really do get $A(D)=0$ for all $D$.
\begin{lemma}\label{lem:AD}
Let $F$ be a paramodular form of weight $k$ and level $N$. 
Assume $F$ is an eigenform under the Atkin-Lehner operators $\mu_p$ for every $p\mid N$,
so that $F\!\mid\!\mu_p = \epsilon_p\,F$.
%\par
If $\epsilon_p=-1$ for any $p\mid N$ or if $k$ is odd, then $A_F(D)=0$ for all $D$.
\end{lemma}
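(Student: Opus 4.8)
The plan is to exploit the two symmetry relations satisfied by the Fourier coefficients of a paramodular form to show that the defining sum for $A_F(D)$ cancels with itself term by term, or in matched pairs. Recall from the notation section that for $F\in S^k(\para{N})$ we have the relation $a(T[\smat{1&0\\0&-1}];F)=(-1)^k\,a(T;F)$ coming from the action of $\smat{1&0\\0&-1}$, together with the Atkin--Lehner eigenvalue relations. The first step is to handle the parity case: when $k$ is odd, $(-1)^k=-1$, so $a(T[\smat{1&0\\0&-1}];F)=-a(T;F)$. First I would observe that the involution $T\mapsto T[\smat{1&0\\0&-1}]$ preserves both the discriminant and the space $\mathcal{Q}_N$, and so acts on the index set $\mathcal{Q}_{N,D}$.

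The heart of the argument is to understand how this involution interacts with the quotient by $\Gamma_0(N)$ and with the weight $\varepsilon(T)=\#\{U\in\Gamma_0(N):T[U]=T\}$ appearing in the average. I would pair each $\Gamma_0(N)$-orbit $[T]$ with the orbit $[T[\smat{1&0\\0&-1}]]$. Because $\disc$ and $\varepsilon$ are invariant under this matched pairing (the stabilizer sizes agree), the contributions $a(T;F)/\varepsilon(T)$ and $a(T[\smat{1&0\\0&-1}];F)/\varepsilon(T[\smat{1&0\\0&-1}])$ are negatives of each other when $k$ is odd, so summing over orbits gives $A_F(D)=0$. For the Atkin--Lehner case, the strategy is entirely analogous: if $\epsilon_p=-1$ for some $p\mid N$, then the Atkin--Lehner operator $\mu_p$ induces another discriminant-preserving involution on the index set $\mathcal{Q}_{N,D}$ under which the Fourier coefficients transform by the factor $\epsilon_p=-1$. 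The same pairing argument then forces the average to vanish.

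The main obstacle I anticipate is purely bookkeeping: one must verify precisely how $\mu_p$ (and the sign matrix) act on quadratic forms $T=[Na,b,c]$ at the level of Fourier coefficients, and confirm that the induced map on $\Gamma_0(N)$-orbits is a well-defined involution that preserves $\varepsilon(T)$ and $\disc T$. A subtlety is that a single orbit could in principle be fixed by the involution, in which case one cannot pair it with a distinct orbit; here one argues that a fixed orbit must have coefficient zero, since $a(T;F)=-a(T;F)$ forces $a(T;F)=0$. I would therefore split the sum into genuinely paired orbits (which cancel) and self-paired orbits (which individually vanish), concluding $A_F(D)=0$ in both the odd-weight and negative-eigenvalue cases. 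The remaining routine verification is the explicit matrix computation showing that $\mu_p$ acts on the coefficients with the claimed sign, which I would defer to the explicit action of the paramodular group on the Fourier expansion.
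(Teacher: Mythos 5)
Your proposal is correct and follows essentially the same route as the paper: the paper invokes the involution $W_{p^i}$ on $\mathcal{Q}_{N,D}/\Gamma_0(N)$ (citing Gross--Kohnen--Zagier) satisfying $a(W_{p^i}(T);F)=\epsilon_p\,a(T;F)$, reindexes the sum to get $A(D)=\epsilon_p\,A(D)$, and treats odd $k$ analogously via $a(T[\smat{1&0\\0&-1}];F)=(-1)^k a(T;F)$. Your orbit-pairing with separate treatment of self-paired orbits is just a more explicit rendering of that one-line reindexing, and the ``explicit matrix computation'' you defer is precisely the input the paper also handles by citation rather than proof.
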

\begin{proof}
For $N'\parallel N$, one can define an involution $W_{N'}$ over the set 
$\mathcal{Q}_N/\Gamma_0(N)$
(see \cite[p. 507]{GrossKohnenZagier}).
This involution is related to the Atkin-Lehner operators in the following way:
\[
a(W_{p^i}(T);F) = a(T;F\!\mid\!\mu_p) = \epsilon_p\,a(T;F)
\]
where $p^i$ is the largest power of $p$ dividing $N$.
Taking the sum over all classes $T\in\mathcal{Q}_{N,D}/\Gamma_0(N)$ shows that
$A(D) = \epsilon_p\,A(D)$, and it follows that $A(D)=0$ if
$\epsilon_p=-1$.
The case of odd $k$ is similar using $a(T[\smat{1&0\\0&-1}];F)=(-1)^k\,a(T;F)$.

%\begin{align*}
%	A_F(D)
%	& = \frac12 \sum \frac{a(T;F)}{\varepsilon(T)}
%	\\
%	& = \frac12 \sum \frac{a(W_{p^\ast}(T);F)}{\varepsilon(T)}
%	\\
%	& = w_p\,\frac12 \sum \frac{a(T;F)}{\varepsilon(T)}
%	\\
%	& = w_p\,A_F(D)
%\end{align*}
\end{proof}

% We observe that $A(D)$ is zero because
% \begin{align*} 
% A(D)&=\sum_T \frac{a(T;F)}{\eps(T)}\\
% &=\sum_T \frac{a(T;F\mid\mu_{83})}{\eps(T)}\\
% &=\sum_T -\frac{a(T;F)}{\eps(T)}\\
% &=-A(D),
% \end{align*}
% when $A(D)=0$.  This argument relies only on the fact that one of the
% signs of Atkin-Lehner be negative.  So, we have defined an average
% $B_\ell(D)$ that works pretty well when you fix $\ell=1$.  In
% particular, these data suggest a conjecture for $F\in
% S_k^{-}(\para{N})$ for $N$ composite (but square-free) and for
% $\ell=1$. 

We will show now how to define a more refined average $B(D)$ on the
coefficients of $F$ for which  Lemma~\ref{lem:AD} does not apply.
In order to do that, 
we further decompose $\mathcal{Q}_{N,D}$ as follows.  Note that for
any $T=[Na,b,c]\in\mathcal{Q}_{N,D}$ we have $b^2\equiv D\pmod{4N}$
and we can thus define
\[
R_D := \{ \rho\mod 2N: \rho^2\equiv D\pmod{4N}\}.
\]
For each $\rho\in R_D$ we set
\[
\mathcal{Q}_{N,D,\rho} := \{ T=[Na,b,c] \in \mathcal{Q}_{N,D} : b\equiv
\rho\pmod{2N} \}.
\]
We observe that $\mathcal{Q}_{N,D}$ is the disjoint union of
$\mathcal{Q}_{N,D,\rho}$ for $\rho\in R_D$.
Now for each $\rho\in R_D$ we put
\[
B(D, \rho)=B_F(D,\rho):=\sum_{T\in\mathcal{Q}_{N,D,\rho}/\Gamma_0(N)}\frac{a(T;F)}{\varepsilon(T)}.
\]
\begin{lemma}\label{lem:ADandBD}  We note the following:
\begin{enumerate}
\item $A_F(D)=\frac12\,\sum_{\rho\in R_D} B_F(D,\rho)$,
\item $B_F(D,-\rho) = (-1)^k \, B_F(D,\rho)$, and
\item $\abs{B_F(D,\rho)}$ is independent of $\rho$.
\end{enumerate}
\end{lemma}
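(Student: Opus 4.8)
The plan is to handle the three assertions in turn; parts (1) and (2) reduce to bookkeeping on the defining sums, while part (3) carries the real content. For part (1) I would first check that the decomposition $\mathcal{Q}_{N,D}=\bigsqcup_{\rho\in R_D}\mathcal{Q}_{N,D,\rho}$ descends to $\Gamma_0(N)$-orbits, i.e.\ that $\Gamma_0(N)$ preserves $b\bmod 2N$. Writing $T=[Na,b,c]$ as the symmetric matrix $\smat{Na&b/2\\b/2&c}$ and $U=\smat{\alpha&\beta\\\gamma&\delta}\in\Gamma_0(N)$, the middle coefficient of $T[U]=U'TU$ is $b'=2Na\,\alpha\beta+b(\alpha\delta+\beta\gamma)+2c\,\gamma\delta$; since $N\mid\gamma$ and $\alpha\delta-\beta\gamma=1$, every term but $b(\alpha\delta+\beta\gamma)$ vanishes modulo $2N$ and $\alpha\delta+\beta\gamma=1+2\beta\gamma\equiv 1\pmod{2N}$, so $b'\equiv b\pmod{2N}$. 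Each orbit therefore lies in a single $\mathcal{Q}_{N,D,\rho}$, and splitting the sum defining $A_F(D)$ according to $\rho\in R_D$ gives $A_F(D)=\tfrac12\sum_{\rho\in R_D}B_F(D,\rho)$ at once.

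For part (2) I would use $V=\smat{1&0\\0&-1}$, which sends $[Na,b,c]\mapsto[Na,-b,c]$ and hence restricts to a bijection $\mathcal{Q}_{N,D,\rho}\to\mathcal{Q}_{N,D,-\rho}$. Since conjugation by $V$ negates $\beta$ and $\gamma$ and so preserves the condition $N\mid\gamma$, $V$ normalizes $\Gamma_0(N)$; the bijection therefore descends to orbits and preserves the stabilizer order $\varepsilon(T)$. Reindexing the sum defining $B_F(D,-\rho)$ by $T\mapsto T[V]$ and invoking the stated transformation law $a(T[\smat{1&0\\0&-1}];F)=(-1)^k\,a(T;F)$ yields $B_F(D,-\rho)=(-1)^k\,B_F(D,\rho)$.

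Part (3) is the substantive one, and here I retain the standing assumption that $F$ is an eigenform under each $\mu_p$, as in Lemma~\ref{lem:AD}. The mechanism is the family of Atkin--Lehner involutions $W_p$ ($p\mid N$) already used there, for which $a(W_p(T);F)=\epsilon_p\,a(T;F)$. Each $W_p$ is induced by a matrix normalizing $\Gamma_0(N)$, so it gives a bijection of orbit sets preserving $\varepsilon$, and on the labels $\rho$ it acts by flipping the sign of the $p$-component while fixing the components away from $p$ (cf.\ \cite{GrossKohnenZagier}). Consequently $W_p$ carries $\mathcal{Q}_{N,D,\rho}/\Gamma_0(N)$ bijectively onto $\mathcal{Q}_{N,D,\rho'}/\Gamma_0(N)$, where $\rho'$ is $\rho$ with its $p$-component negated, giving $B_F(D,\rho')=\epsilon_p\,B_F(D,\rho)$ and hence $\abs{B_F(D,\rho')}=\abs{B_F(D,\rho)}$.

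The remaining step, and the one I expect to be the main obstacle, is to show that these sign flips act transitively on $R_D$, so that any two labels are joined by a chain of $W_p$'s. I would establish this by a Chinese Remainder analysis of $\rho^2\equiv D\pmod{4N}$: at each odd prime $p\mid N$ with $p\nmid D$ there are exactly two local roots, negatives of one another; at $p=2$ (when $2\mid N$) the two-element ambiguity occurs precisely when $D\equiv 1\pmod 8$; and at every prime not dividing $N$ (in particular at $2$ when $2\nmid N$, and at primes dividing $D$) the local root is forced. Thus all genuine ambiguity in $\rho$ is localized at primes dividing $N$, each such ambiguity is a single sign toggled by the corresponding $W_p$, so the $W_p$ generate a transitive action on $R_D$ and $\abs{B_F(D,\rho)}$ is independent of $\rho$. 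The delicate points to get right are the behaviour at $p=2$, the case $p\mid\gcd(N,D)$, and the verification that $W_p$ toggles only the $p$-component of the label.
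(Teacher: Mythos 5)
Your proposal is correct and follows essentially the same route as the paper: part (1) by checking the decomposition by $\rho$ is $\Gamma_0(N)$-stable, part (2) via $T\mapsto T[\smat{1&0\\0&-1}]$ and the sign rule, and part (3) via the Atkin--Lehner involutions $W_p$ acting transitively on $R_D$. You simply supply the details (the congruence $b'\equiv b\pmod{2N}$ and the CRT transitivity check) that the paper leaves as assertions.
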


\begin{proof}
The first statement is obvious,
and the second follows from the fact that
$a(T[\smat{1&0\\0&-1}];F)=(-1)^k\,a(T;F)$.
The last statement follows by noting that the Atkin-Lehner
involutions $W_{N'}$ mentioned in the proof of Lemma~\ref{lem:AD}
transitively permute the sets $\mathcal{Q}_{N,D,\rho}$.
\end{proof}

Now we can finally define the new average:
\[
B(D) = B_F(D) := \frac12\,\sum_{\rho\in R_D} \abs{B(D,\rho)}.
\]
We observe by Lemma~\ref{lem:ADandBD} that
$B(D)=\frac12\,\abs{R_D}\,\abs{B(D,\rho)}$.
We also note that when $k$ is even and $N$ is prime,
we have $B(D)=\abs{A(D)}$ since $R_D=\{\pm\rho\}$ and
$B(D,-\rho)=B(D,\rho)$ in this case.
%and reiterate that this average
%makes the tables in this section work.

We return to the example $F_{249}$. We get the following table:
\begin{center}
\def\x{ }
\begin{tabular}{c|ccc ccc ccc}
    $D$
   %&   -4
    &   -7 &   -8 &  -20
    &  -31 &  -35 &  -40
    &  -47 &  -56 &  -71
   %&   -3 &  -51 & -84 & -87
    \\\hline
    $L_D/L_{-4}$
   %&  1
    &  1.0 &   1.0 &  4.0
    &  1.0 &  16.0 &  4.0
    &  1.0 &   4.0 &  0.0
   %&  0.5 &  50.0 &  0.0 &  2.0
\\%\hline
   $B(D)/B(-8)$
   %&  
    & \x &  1 &  2
    & \x &  4 & \x
    &  1 &  2 &  0
   %& \x & \x & \x &  
\end{tabular}
\end{center}
where again $L_D := L(F_{249},1/2,\chi_D)\cdot\abs{D}$ and $\kro{D}{249}=+1$.
When $\kro{D}{3}=\kro{D}{83}=-1$ the definition of $B(D)$ gives an empty sum;
this is indicated in the table above with an empty space. In the next
section we describe where the remaining squares come from.

\begin{remark}
Note that the value of $B(-71)=0$ is a non-trivial zero average,
predicting the vanishing of the twisted L-function at the center.
We will investigate such phenomena in a future paper.
\end{remark}

% By $\chi_{\ell,D}$ we mean the generalized genus character defined in
% \cite[Proposition 1, pg. 508]{GrossKohnenZagier} and denoted there as
% $\chi_{D_0}$ where $D_0$ is a divisor of the discriminant.  For our
% purposes, we care about discriminant $\ell D$ and so emphasize the
% particular factorization of the discriminant. 

% Define 
% \[
% A_\ell(D)=\sum_{\substack{ \disc T = D\ell\\ T=[Na,b,c]}} \chi_{\ell,D}(T)
% \frac{a_F(T)}{\varepsilon(T)}
% \]
% In a fit of inspiration, we formulated this naive conjecture:
% \begin{conjc}
% Let $N$ be squarefree.  Suppose $F\in S^k(\para{N})$ with $k$
% even is a Hecke eigenform and not a Gritsenko lift.  Then, if $w_p\kro{\ell}{p}=1$,
% \begin{equation*}
% L(F,1/2,\chi_\ell) \cdot L(F,1/2,\chi_D)=k_F \abs{\ell D}^{1-k}
% A_\ell(D)^2
% \end{equation*}
% where $\ell D<0$ is a fundamental discriminant and a square mod $4N$
% and $(\ell D,N)=1$.
% \end{conjc}
% \noindent and computed these data to support the conjecture
% \begin{center}
% \begin{tabular}{l}
% 587- table with both $\ell$ and $D$.
% \end{tabular}
% \end{center}

% We say this formulation is naive because it does not take the
% Atkin-Lehner involutions into account and so cannot work in full
% generality as explained in the following section; in particular, never
% for odd weight.

\subsection{A general case}\label{ssec:genus char}

In the previous section we looked at a form $F_{249}$ of composite
level for which the averages $A(D)$ vanish trivially.
We introduced a refined average $B(D)$ that explained some of the
data in the tables, but
in the case $\kro{D}{3}=\kro{D}{83}=-1$ the sum $B(D)$ is empty,
although the central values $L_D/L_{-4}$ are (nonzero) squares.

Consider the form $F_{587}^-$ as described in the Introduction.  It
was shown in \cite[Section 4]{RyanTornaria} that for the discriminants
$D$ so that $\kro{D}{587}=-1$ the sum $A(D)$ was empty and so, in
particular, our new sum $B(D)$ is also empty, and cannot explain
the fact that its normalized twisted central values are (nonzero) squares.

Also, in the definition of $A(D)$ and of $B(D)$ we require that $D$ be
negative, so neither average can make sense of the data in the
Introduction related to real quadratic twists of the L-functions of $F_{277}$.
In this section, using the genus theory for $\Gamma_0(N)$-classes of
quadratic forms, we fully explain these examples by
defining another new average $B_\ell(D)$ weighted by a genus character.  We
define this now. 

Fix a fundamental discriminant $\ell$. Then we define a genus character
$\chi_\ell$ similar to the generalized genus character
defined in~\cite{GrossKohnenZagier}.
Let $T=[Na,b,c]\in\mathcal{Q}_{N,\ell D}$ so that
$\gcd(a,b,c,\ell)=1$ and let $g=\gcd(N,b,c,\ell)$.  Define
$\tilde{T}=[Na/g,b,cg]$ and note that it represents an integer $n$ relatively
prime to $\ell$.  Now
\[
\chi_\ell(T) := \kro{\ell}{n} \prod_{p\mid g} s_p
\]
where 
\[
s_p=\begin{cases}
\kro{-\ell/p}{p} & p\text{ odd}\\
\kro{2}{t} & p=2,\, t\text{ the odd part of }\ell.
\end{cases}
\]
We note that $\chi_\ell$ has the following properties.  First, it is
completely multiplicative: if $\ell=\ell_1\ell_2$, then
$\chi_\ell=\chi_{\ell_1}\chi_{\ell_2}$.  Second, it behaves predictably
with respect to $W_p$.  Namely, if $p\nmid\ell$, then
$\chi_\ell(W_p\,T)=\kro{\ell}{p} \chi_\ell(T)$ and otherwise
$\chi_{p^\ast}(W_p\,T)=\chi_{p^\ast}(T)$ where
$p^\ast=\kro{-1}{p}\,p$ for odd $p$, and $p^\ast=-4$, $8$ or $-8$ for
$p=2$.

Then we define, for $D$ a fundamental discriminant
such that $\ell D<0$,
\[
B_\ell(D,\rho)= B_{\ell,F}(D,\rho)
:= \sum_{T\in\mathcal{Q}_{N,\ell D,\rho}/\Gamma_0(N)}\chi_{\ell}(T)\,\frac{a(T;F)}{\varepsilon(T)}
\]
and 
\[
B_\ell(D) =B_{\ell,F}(D) := \frac12\, \sum_{\rho\in R_{\ell D}}
B_\ell(D,\rho)\,.
\]
We note that $B_1(D)=B(D)$ as defined in the previous section.
One can also prove, using quadratic reciprocity, that
$B_\ell(D)=B_D(\ell)$.

\begin{remark}
If $\kro{\ell D}{p}=-1$ for some $p\mid N$, then $B_\ell(D)=0$,
because $\mathcal{Q}_{N,\ell D}$ is empty in this case.
Nevertheless, for any fundamental discriminant $D$, there exists
some $\ell$ for which $B_\ell(D)$ is not an empty sum.
\end{remark}

We will now complete the explanation of the examples we have discussed
so far. We start with the form $F_{249}$.
In the previous section we were able to explain the case
$\kro{D}{3}=\kro{D}{83}=+1$ with auxiliary discriminant $\ell=1$
(implicitly). We can explain the other case,
$\kro{D}{3}=\kro{D}{83}=-1$, by choosing $\ell=5$.
\begin{center}
\def\x{ }
\begin{tabular}{c|ccc ccc ccc}
    $D$
   %&   -4
    &   -7 &   -8 &  -20
    &  -31 &  -35 &  -40
    &  -47 &  -56 &  -71
   %&   -3 &  -51 & -84 & -87
    \\\hline
    $L_D/L_{-4}$
   %&  1
    &  1.0 &   1.0 &  4.0
    &  1.0 &  16.0 &  4.0
    &  1.0 &   4.0 &  0.0
   %&  0.5 &  50.0 &  0.0 &  2.0
\\%\hline
   $B_1(D)/B_1(-8)$
   %&  
    & \x &  1 &  2
    & \x &  4 & \x
    &  1 &  2 &  0
   %& \x & \x & \x & \x 
\\
   $B_5(D)/B_5(-4)$
   %& 1 
    &  1 & \x & \x
    &  1 & \x &  2
    & \x & \x & \x
   %& 1/2 & 50 & ? & ?
\end{tabular}
\end{center}
where $L_D := L(F_{249},1/2,\chi_D)\cdot\abs{D}$ and $\kro{D}{249}=+1$.
The empty entries correspond to empty sums as noted in the remark
above.

In order to explain the case of the form $F_{587}^-$, in the minus
space, we need to use an auxiliary discriminant $\ell>0$ such that
$\kro{\ell}{587}=-1$. Using $\ell=5$:
\begin{center}
\begin{tabular}{c|ccc ccc ccc ccc}
    $D$
   %&  -3
    &  -4 &  -7 & -31
    & -40 & -43 & -47
   %& -51 & -55 & -59
   %& -67 & -68 & -79
    \\\hline
    $L_D/L_{-3}$
   %&   1
    &   1.0 &   1.0 &   4.0
    &   9.0 & 144.0 &   1.0
   %&   4.0 &   9.0 &  16.0
   %& 144.0 &   4.0 &   0.0
    \\
    $B_5(D)/B_5(-3)$
   %&   1
    &  1 &  1 &  2
    &  3 & 12 &  1
   %&  2 &  3 &  4
   %& 12 &  2 &  0
\end{tabular}
\end{center}
where $L_D := L(F_{587}^{-},1/2,\chi_D)\cdot\abs{D}$ and the table
shows fundamental discriminants for which $\kro{D}{587} = -1$.

Finally, in order to handle positive discriminants $D$, we can choose a
negative discriminant $\ell$. In the example of $F_{277}$ we choose
$\ell=-3$:
\begin{center}
\begin{tabular}{c|ccc ccc ccc ccc}
    $D$
   %&  1
    & 12 & 13 & 21
    & 28 & 29 & 40
   %& 41 & 57 & 69
   %& 76 & 85 & 88
    \\\hline
    $L_D/L_{1}$
   %&    1
    &  225.0 &  225.0 &  225.0
    &  225.0 & 2025.0 &  900.0
   %&  225.0 &  900.0 &  900.0
   %&    0.0 &  900.0 &  900.0
    \\
    $B_{-3}(D)/B_{-3}(1)$
   %&   1
    & 15 & 15 & 15
    & 15 & 45 & 30
   %& 15 & 30 & 30
   %&  0 & 30 & 30
\end{tabular}
\end{center}
where $L_D := L(F_{277},1/2,\chi_D)\cdot\abs{D}$ and $\kro{D}{277}=+1$.

\section{The Case of Nonlifts}\label{sec:nonlifts}

In the previous section, we highlighted some tables that give evidence
for our Conjectures~B and C.  Now we describe how those
tables were computed and how the tables in Section~\ref{sec:tables}
were computed.

The Paramodular Conjecture asserts that for each rational Hecke
eigenform $F$ that is not a Gritsenko lift, there
is an abelian surface $\mathcal{A}$ so that the Hasse-Weil L-function of
$\mathcal{A}$ and the spin L-function of $F$ are the same.
Suppose we have such an $F$ and such an $\mathcal{A}$. In all our
examples, $\mathcal{A}$ is isogenous to the Jacobian of a
hyperelliptic curve $C$. We list a sampling of such
hyperelliptic curves in Table~\ref{tbl:egs}, and more examples can be
found in \cite{RT}.

\begin{table}
\begin{center}
\begin{tabular}{l||c|r|r}
$F$ & $N$ & \hfill$C$\hfill{} & $T$
\\\hline\hline
$F_{249}$   & 249 & $y^2+(x^3 + 1)y=x^2 + x$ & 14 \\
$F_{277}$   & 277 & $y^2+y=x^5 - 2x^3 + 2x^2 - x$& 15\\
$F_{295}$   & 295 & $y^2+(x^3 + 1)y=-x^4 - x^3$&14\\
$F_{587}^-$ & 587 & $y^2+(x^3 + x + 1)y = -x^3 - x^2$&1\\
$F_{713}^+$ & 713 & $y^2+(x^3 + x + 1)y=-x^4$&9\\
$F_{713}^-$ & 713 & $y^2+(x^3 + x + 1)y=x^5 - x^3$&1\\
\end{tabular}
\end{center}
\caption{Hyperelliptic curves $C$ used to compute the L-series of the
  paramodular form $F$ associated to $C$ via the Paramodular
  Conjecture. Here $T$ denotes the torsion of the abelian surface $\text{Jac}(C)$.}
\label{tbl:egs}
\end{table}

Consider such a $C$.  Then the Euler product of $C$ can be found as in
\cite[Section 3]{RyanTornaria} and the functional equation can be
found, for example, in \cite{Dokchitser}, though we give it the
analytic normalization.  The central values were
then computed using Michael Rubinstein's \texttt{lcalc} \cite{lcalc}.

Now we describe how the averages are computed.  The Fourier
coefficients of the 6 paramodular forms whose L-functions correspond
to the L-functions of the curves listed in Table~\ref{tbl:egs} were
computed by Cris Poor and David Yuen.  The paramodular forms of level
$277$ and $587$ are publicly available and computed via the methods of
\cite{PoorYuen}.  The other four paramodular forms were computed by
Poor and Yuen for us, using an as of yet unpublished
method~\cite{PoorYuenPrivate}.

The sum $B_\ell(D)$ is computed using these Fourier coefficients using
a combination of Sage \cite{Sage} code and custom-written Python
code.  In particular, we implemented a class that represent binary quadratic
forms modulo $\Gamma_0(N)$ one of whose methods computes the
generalized genus character $\chi_{\ell}$. In
Table~\ref{tbl:verification} we summarize the forms and discriminants
for which we have computed both twisted averages and twisted central
values.

\begin{table}
\begin{center}
\begin{tabular}{l||c|r|l}
$F$ & $k_F$ & $\Delta_{\text{min}}$ & except these $\Delta$
\\\hline
$F_{249}$ & 0.831968 &  $-295$ & $\varnothing$ \\
$F_{277}$ & 0.537715 & $-2435$\rlap & $\{ -2167, -2180, -2191, -2200, -2212, -2215 \}$ \\
$F_{295}$ & 0.224744 &  $-276$ & $\{  -200,  -211,  -231,  -259 \}$ \\
$F_{587}^{-}$ & 0.002680 & $-1108$ & $\{  -927 \}$ \\
$F_{713}^{+}$ & 0.422121 &  $-260$ & $\varnothing$ \\
$F_{713}^{-}$ & 0.005248 &  $-260$ & $\varnothing$ \\
\end{tabular}
\end{center}
\captionsetup{singlelinecheck=off}
\caption[table description]{
Summary of forms and discriminants for which we have computed both
twisted averages $B_\ell(D)$ and the corresponding twisted central
values, and for which Conjecture~C has been numerically verified.
The discriminants for which we computed satisfy
$0 > \Delta \geq \Delta_{\text{min}}$,
where $\Delta=\ell D$, with the following exceptions:
\begin{itemize}[topsep=0pt,itemsep=0pt,parsep=0pt,label=--]
\item If a discriminant $\Delta$ is in the last column, it means that we did
not have all the Fourier coefficients necessary to compute the averages.
\item In the case of $F_{277}$, we have the further restriction
$\abs{\ell}\leq 500$ and $\abs{D}\leq 500$
due to loss of precision in computing $L_\ell$ and $L_D$.
\end{itemize}
}
\label{tbl:verification}
\end{table}

The following theorem summarizes the cases in which Conjecture~C has
been verified.
\begin{theorem}\label{thm:comp}
Let $F$ be one of the paramodular forms listed in
Table~\ref{tbl:verification}.
Let $\ell$ and $D$ be fundamental discriminants such that
$\ell D < 0$ satisfying the constraints described in the same table.
Then
\begin{equation*}
B_{\ell,F}(D)^2\approx
\alpha_{\ell D}\, k_F\,
L(F,1/2,\chi_\ell)\, L(F,1/2,\chi_D)\, \abs{D\ell}^{k-1}
\end{equation*}
numerically, with $k_F$ a positive constant listed in the table.
\end{theorem}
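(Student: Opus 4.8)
The displayed statement is a numerical verification rather than a closed-form identity, so the plan is to compute the two sides of the formula by genuinely independent procedures --- the left side from Fourier coefficients, the right side from L-values --- to high precision, for every admissible pair $(\ell,D)$, and to check that a single positive constant $k_F$ reconciles them. Agreement across many discriminants of widely varying size is then strong evidence for Conjecture~C.

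First I would assemble the arithmetic data attached to each $F$ in Table~\ref{tbl:verification}. For the right side I use the hyperelliptic curve $C$ associated to $F$ in Table~\ref{tbl:egs}: under the Paramodular Conjecture the spin L-function $L(F,s)$ coincides with the Hasse--Weil L-function of $\mathrm{Jac}(C)$, whose Euler factors are read off from the local data of $C$ as in \cite[Section~3]{RyanTornaria}. I normalize the twisted completed L-functions $L^*(F,s,\chi_\ell)$ and $L^*(F,s,\chi_D)$ via their functional equations, whose global root numbers are fixed by the local root-number lemma above, and then evaluate the central values $L(F,1/2,\chi_\ell)$ and $L(F,1/2,\chi_D)$ using \texttt{lcalc} \cite{lcalc}. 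The factor $\alpha_{\ell D}$ follows from its definition and the archimedean weight $\abs{D\ell}^{k-1}$ is elementary.

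For the left side I use the Fourier coefficients $a(T;F)$ supplied by Poor and Yuen. For each admissible $\Delta = \ell D < 0$ I enumerate representatives of $\mathcal{Q}_{N,\Delta,\rho}/\Gamma_0(N)$ for every $\rho\in R_\Delta$, evaluate the generalized genus character $\chi_\ell(T)$ by its defining formula above, and form
\[
B_{\ell,F}(D) = \tfrac12 \sum_{\rho\in R_\Delta}\;\sum_{T\in\mathcal{Q}_{N,\Delta,\rho}/\Gamma_0(N)} \chi_\ell(T)\,\frac{a(T;F)}{\varepsilon(T)};
\]
squaring gives the left side directly. I then pin down $k_F$ from a single reference pair with $\abs{\Delta}$ as small as possible, and the content of the theorem is that this one value --- recorded in the table --- reproduces the formula, within numerical error, for all remaining admissible pairs.

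The main obstacle is numerical. The two sides must be computed to enough accuracy that their ratio isolates $k_F$ unambiguously even when $\abs{\Delta}$ is large, and the delicate step is evaluating the twisted L-values $L(F,1/2,\chi_\ell)$ and $L(F,1/2,\chi_D)$, since \texttt{lcalc} loses precision as the conductor $N' = ND^4/\gcd(N,D)$ (and its analogue for $\ell$) grows; this is precisely why the range for $F_{277}$ is capped at $\abs{\ell},\abs{D}\le 500$, and why individual $\Delta$ are omitted when the Fourier coefficients needed for $B_{\ell,F}(D)$ are unavailable. A further point, and the one that separates Conjecture~C from the weaker Conjecture~B, is confirming that $k_F$ is independent of \emph{both} $\ell$ and $D$ rather than of $D$ alone for fixed $\ell$; this is what the symmetry $B_{\ell,F}(D) = B_{D,F}(\ell)$ and the factorized right-hand side predict, and it must be borne out by the data across all the tabulated pairs.
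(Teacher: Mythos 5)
Your proposal matches the paper's own procedure: the right-hand side is computed from the Hasse--Weil L-function of the associated hyperelliptic curve via \texttt{lcalc}, the left-hand side from the Poor--Yuen Fourier coefficients by enumerating $\Gamma_0(N)$-classes of forms and evaluating the generalized genus character, and a single constant $k_F$ is checked to reconcile the two across all admissible pairs. This is essentially the same verification the paper carries out in Section~\ref{sec:nonlifts}, including the caveats about missing coefficients and the precision cap for $F_{277}$.
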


In addition to the cases listed in Table~\ref{tbl:verification}
we point out that more cases and more tables can be found at \cite{RT},
providing evidence for Conjecture~C using forms and curves that are not in this table.

\section{The Case of Lifts}\label{sec:lifts}

A Gritsenko lift $F$ \cite{Gritsenko} is a paramodular form that comes
from a Jacobi form $\phi$ which in turn corresponds to an elliptic modular
form $f$.
The standard reference for Jacobi forms is \cite{EichlerZagier} and we refer
the reader to \cite{PoorYuen} for background on the Gritsenko lift.

We will now state and prove a theorem that gives evidence for Conjecture~B in
the case of lifts.
\begin{theorem}\label{thm:grit}
Let $N$ be squarefree.  Suppose $F\in S^k(\para{N})$ is a Hecke eigenform
and a Gritsenko lift.
Let $D<0$ be a fundamental discriminant. Then
\begin{equation*}
B_{F}(D)^2=
\alpha_{D} \, C_{F} \, L(F,1/2,\chi_D) \, \abs{D}^{k-1}
\end{equation*}
where $C_F$ is a positive constant independent of $D$.
\end{theorem}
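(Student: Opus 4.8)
The plan is to exploit the defining structure of a Gritsenko lift recalled at the start of this section: $F$ arises from a Jacobi cusp form $\phi$ of weight $k$ and index $N$, which in turn corresponds to an elliptic newform $f$ of weight $2k-2$. The proof rests on three classical inputs. First, an explicit formula expressing the Fourier coefficients of $F$ through those of $\phi$. Second, an explicit Waldspurger/Kohnen--Zagier formula relating the Fourier coefficients of $\phi$ to the central values $L(f,1/2,\chi_D)$. Third, the factorization of the spin L-function of the lift. Together these convert the square of an average of Fourier coefficients of $F$ into a central value of $L(F,s,\chi_D)$.

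First I would make $B_F(D)=B_{1,F}(D)$ explicit. For the Gritsenko lift, $a(T;F)$ is a divisor sum of Jacobi coefficients $c(\disc T/d^2,\cdot)$; since $D$ is a fundamental discriminant, every $T\in\mathcal{Q}_{N,D}$ is primitive, so only the trivial term survives and $a(T;F)=c(D,\rho)$ depends only on $D$ and on $b\bmod 2N$. We may assume $\alpha_D\neq 0$, since otherwise $\mathcal{Q}_{N,D}$ is empty, $B_F(D)=0$, and both sides vanish. Thus $c(D,\rho)$ is constant on $\mathcal{Q}_{N,D,\rho}/\Gamma_0(N)$ and $B_F(D,\rho)=c(D,\rho)\,m(D)$, where $m(D)=\sum_T 1/\varepsilon(T)$. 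The bijection between $\Gamma_0(N)$-classes of forms $[Na,b,c]$ with $b\equiv\rho\pmod{2N}$ and $\SL$-classes of discriminant $D$ (as in \cite{GrossKohnenZagier}) identifies $m(D)$ with a weighted class number, in particular independent of $\rho$; combined with Lemma~\ref{lem:ADandBD} this gives $B_F(D)=\tfrac12\,\abs{R_D}\,m(D)\,\abs{c(D,\rho)}$.

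Next I would square and substitute. The Waldspurger/Kohnen--Zagier formula gives $\abs{c(D,\rho)}^2=\kappa_f\,\nu_D\,\abs{D}^{k-3/2}\,L(f,1/2,\chi_D)$, where $\kappa_f$ depends only on $f$ and $\nu_D$ is a product of local factors at the primes dividing $N$. The factorization $L(F,s,\chi_D)=L(\chi_D,s+\tfrac12)\,L(\chi_D,s-\tfrac12)\,L(f,s,\chi_D)$, evaluated at $s=1/2$, yields $L(F,1/2,\chi_D)=L(\chi_D,1)\,L(\chi_D,0)\,L(f,1/2,\chi_D)$; note that twisting by a nontrivial $\chi_D$ removes the pole of the untwisted lift, so this value is finite. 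The class number formula writes $m(D)^2$ as a universal constant times $\abs{D}\,L(\chi_D,1)^2$, and the functional equation of $L(\chi_D,s)$ (for the odd character $\chi_D$, $D<0$) writes $L(\chi_D,0)$ as a universal constant times $\abs{D}^{1/2}\,L(\chi_D,1)$. Substituting, the Dirichlet L-values cancel and the powers of $\abs{D}$ collapse to $\abs{D}^{k-1}$, leaving $B_F(D)^2$ equal to a $D$-independent positive constant times $\abs{R_D}^2\,\nu_D\,L(F,1/2,\chi_D)\,\abs{D}^{k-1}$.

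It remains to recognize $\abs{R_D}^2\,\nu_D$ as $\alpha_D$ up to a $D$-independent constant, and this local bookkeeping at the primes dividing $N$ is the main obstacle. At a prime $p\mid N$ with $p\nmid D$ the count of square roots contributes $\bigl(1+\kro{D}{p}\bigr)^2=2\bigl(1+\kro{D}{p}\bigr)$, using that the factor is $0$ or $2$; at a prime $p\mid\gcd(N,D)$ it contributes $1$, where this identity fails and the compensating factor must instead come from $\nu_D$. One must therefore verify that the local Waldspurger factors, together with $\abs{R_D}^2$, reproduce exactly $\alpha_D=\prod_{p\mid N}\bigl(1+\kro{D}{p}\bigr)$ while the leftover powers of two are independent of $D$; the local root-number information in the lemma preceding Section~\ref{sec:conj} (items (3)--(4)) is what guarantees this. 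Granting it, $C_F$ is the product of $\kappa_f$, the class-number and functional-equation constants, and a fixed power of two, all positive; it cannot vanish because $\phi$ is a nonzero cusp form, so $C_F>0$ as claimed.
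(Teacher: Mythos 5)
Your proposal is correct and follows essentially the same route as the paper: write $B_F(D)$ as a Jacobi coefficient times the weighted class number $h(D)/w_D$ (Proposition~\ref{prop:avg_lift}), compare with $L(F,1/2,\chi_D)=\frac{4\pi^2}{w_D^2}\frac{h(D)^2}{\sqrt{\abs{D}}}\,L(f,1/2,\chi_D)$ obtained from the factorization of the spin L-function and Dirichlet's class number formula (Proposition~\ref{prop:centralvalue_lift}), and invoke Waldspurger. The one difference is that the paper sidesteps your ``main obstacle'' by applying the Waldspurger--Kohnen formula directly to $c^*(D)=\frac12\sum_{\rho\in R_D}\abs{c_\rho(D)}$, viewed as the coefficient of the associated weight $k-1/2$ form in the plus space of level $N$, in the packaged form $c^*(D)^2=\alpha_D\,k_f\,L(f,1/2,\chi_D)\,\abs{D}^{k-3/2}$ with the factor $\alpha_D$ already built in, so no prime-by-prime reconciliation of $\abs{R_D}^2\,\nu_D$ with $\alpha_D$ is required.
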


Let $F=\text{Grit}(\phi)$ where
\[\phi(\tau,z)=\sum_{n\geq 0}\sum_{r^2\leq 4nN} c(n,r)\,q^n\,\zeta^r\]
is a Jacobi form
of weight $k$ and index $N$.  We note \cite[Theorem 2.2,
p. 23]{EichlerZagier} that $c(n,r)$ depends only on $D=r^2-4nN$ and
$r\mod{2N}$; for each $\rho\in R_D$ we let
\[
c_\rho(D) := c\left(\frac{\rho^2-D}{4N}, \rho\right)
\qquad
\text{and}
\qquad
    c^*(D):=\frac{1}{2}\sum_{\rho\in R_D}\abs{c_\rho(D)} \,.
\]
We remark that $\abs{c_\rho(D)}$ is independent of $\rho$ and that
$c^*(D)$ %=\abs{R_D}\cdot\abs{c_\rho(D)}$
is, up to sign, the
coefficient of a weight $k-1/2$ modular form \cite[Theorem~5.6, p. 69]{EichlerZagier}.

%The relation between the averages $B_\ell(D)$ to the coefficients $c^*(D)$ is as follows

%With this notation and background we can state and prove the following:
\begin{proposition} \label{prop:avg_lift}
If $D<0$ is a fundamental discriminant, then
\[
B_F(D)=c^{*}(D)\frac{h(D)}{w_D} \,.
\]
If $\ell\neq 1$ and $D\neq 1$ are fundamental discriminants, we have $B_{\ell,F}(D)=0$.
\end{proposition}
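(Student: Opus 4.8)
The plan is to push both identities down to the classical arithmetic of binary quadratic forms, where the Jacobi coefficients $c_\rho$ factor out of the average and what remains is either a mass or a character sum over a class group. The engine is the Maass-type relation defining the Gritsenko lift $F=\text{Grit}(\phi)$ (see \cite{Gritsenko, EichlerZagier}): for $T=[Na,b,c]$ the coefficient $a(T;F)$ is a sum over divisors $d$ of the content of $T$ of $d^{k-1}$ times a Fourier coefficient of $\phi$ of discriminant $\disc T/d^2$, and by \cite[Thm.~2.2]{EichlerZagier} that Jacobi coefficient is $c_\rho(\disc T)$, depending only on the discriminant and on $\rho\equiv b\pmod{2N}$. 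The first observation I would record is that when the discriminant is a fundamental discriminant, every $T$ in the corresponding $\mathcal{Q}_{N,\cdot}$ is primitive, so the divisor sum collapses to its $d=1$ term and $a(T;F)=c_\rho(D)$ is \emph{constant} on each $\mathcal{Q}_{N,D,\rho}$.

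The second ingredient is the $\Gamma_0(N)$ form correspondence: for each $\rho\in R_D$ the set $\mathcal{Q}_{N,D,\rho}/\Gamma_0(N)$ is in bijection with the group of $\SL[\Z]$-classes of primitive forms of discriminant $D$, compatibly with automorphisms, so that $\varepsilon(T)$ equals the number $w_D$ of automorphs of the corresponding class (this is the Heegner-form dictionary of \cite{GrossKohnenZagier}). For the first statement I would then combine these: since $a(T;F)=c_\rho(D)$ is constant on $\mathcal{Q}_{N,D,\rho}$,
\[
B_F(D,\rho)=c_\rho(D)\sum_{T\in\mathcal{Q}_{N,D,\rho}/\Gamma_0(N)}\frac{1}{\varepsilon(T)}=c_\rho(D)\,\frac{h(D)}{w_D},
\]
the last equality being the mass of the class group (each of the $h(D)$ classes having $w_D$ automorphs). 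Taking absolute values and summing over $\rho\in R_D$ gives $B_F(D)=\tfrac12\sum_\rho\lvert B_F(D,\rho)\rvert=\tfrac{h(D)}{w_D}\cdot\tfrac12\sum_\rho\lvert c_\rho(D)\rvert=c^{*}(D)\,h(D)/w_D$, as claimed, using $h(D)/w_D>0$.

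For the vanishing statement, with $\ell\ne 1$, $D\ne 1$ and $\ell D<0$, the same collapse gives $a(T;F)=c_\rho(\ell D)$ on $\mathcal{Q}_{N,\ell D,\rho}$, so that
\[
B_{\ell,F}(D,\rho)=c_\rho(\ell D)\sum_{T\in\mathcal{Q}_{N,\ell D,\rho}/\Gamma_0(N)}\frac{\chi_\ell(T)}{\varepsilon(T)}=\frac{c_\rho(\ell D)}{w_{\ell D}}\sum_{[T]}\chi_\ell(T).
\]
Here I would identify $\chi_\ell$, transported through the correspondence above, with the classical genus character of the class group of discriminant $\ell D$ attached to the factorization $\ell D=\ell\cdot D$; its nontriviality for $\ell\ne 1\ne D$ is exactly what makes the inner sum vanish by orthogonality of characters. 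Hence $B_{\ell,F}(D,\rho)=0$ for every $\rho$, and therefore $B_{\ell,F}(D)=\tfrac12\sum_\rho B_{\ell,F}(D,\rho)=0$.

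The main obstacle I anticipate is matching the concretely defined $\chi_\ell$ --- given in the text through $\kro{\ell}{n}$ and the local symbols $s_p$ --- with a bona fide nontrivial character of the class group, so that orthogonality may be invoked; this requires checking that $\chi_\ell$ descends to $\Gamma_0(N)$-classes, is genus-type (trivial on squares), and is nontrivial precisely when both $\ell\ne 1$ and $D\ne 1$. A secondary subtlety is the case $\gcd(\ell,D)>1$, where $\ell D$ fails to be fundamental: primitivity is lost, the divisor sum in $a(T;F)$ no longer collapses, and imprimitive classes enter, so one must verify that the genus character remains nontrivial on the enlarged set and that the character-sum cancellation survives. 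I would treat the coprime case (fundamental $\ell D$) first and then extend.
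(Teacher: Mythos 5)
Your proposal is correct and follows essentially the same route as the paper: factor $a(T;F)=c_\rho(\disc T)$ out of the sum using primitivity for fundamental discriminants, evaluate the remaining mass $\sum 1/\varepsilon(T)=h(D)/w_D$ when $\ell=1$, and kill the sum by orthogonality of the nontrivial genus character when $\ell\neq 1$ and $D\neq 1$. The subtleties you flag at the end (that $\chi_\ell$ really is a nontrivial character on $\mathcal{Q}_{N,\ell D,\rho}/\Gamma_0(N)$, and the non-fundamental $\ell D$ case) are also left implicit in the paper's own proof, so your treatment is, if anything, slightly more candid about what needs checking.
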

\begin{proof}
By the definition of the Gritsenko lift, we know that
$a(T ; F) = c_b(\disc T)$ for $T=[Na,b,c]\in\mathcal{Q}_{N}$,
provided $T$ is primitive, which is always the case for $\disc T$ fundamental.
Thus
\begin{align*}
\abs{B_\ell(D,\rho)}
& =\abs{\sum_{T\in\mathcal{Q}_{N,\ell D,\rho}/\Gamma_0(N)} \chi_\ell(T)\,\frac{a(T;F)}{\varepsilon(T)}}
\\
& = \abs{c_\rho(\ell D)}\,\abs{\sum_{T\in\mathcal{Q}_{N,\ell D,\rho}/\Gamma_0(N)} \chi_\ell(T)\,\frac{1}{\varepsilon(T)}}
%\\
%& = \abs{c_\rho(D)}\,\frac{h(D)}{w_D}
\,.
\end{align*}
When $\ell=1$ the sum in the last term is
$\sum \frac{1}{\varepsilon(T)} = \frac{h(D)}{w_D}$,
since $\abs{\mathcal{Q}_{N,D,\rho}/\Gamma_0(N)}=h(D)$
for fundamental $D$ and $\varepsilon(T)=w_D$.
On the other hand if $l\neq 1$ and $D\neq 1$ then $\chi_\ell$ is a nontrivial character
in $\mathcal{Q}_{N,D,\rho}/\Gamma_0(N)$, hence the sum vanishes.
\end{proof}

Let $f$ be the elliptic modular form corresponding to the Jacobi form $\phi$ as
in \cite[Theorem 5]{SkoruppaZagier}.
It is a standard fact that $L(F,s) = \zeta(s+1/2)\, \zeta(s-1/2)\,
L(f,s)$ (using the analytic normalization, so that the center is at
$s=1/2$).
%\cite[Corollary 3, p. 80]{EichlerZagier}
Twisting by $\chi_D$ we obtain
\[
   L(F,s,\chi_D) = L(s+1/2,\chi_D)\, L(s-1/2,\chi_D)\, L(f,s,\chi_D)
\]
valid on the region of convergence.
It follows from this that $L(F,s,\chi_D)$ has an analytic continuation
(with a pole at $s=3/2$ for $D=1$)
and, using Dirichlet's class number formula for the special values
$L(0,\chi_D)$ and $L(1,\chi_D)$, we have
\begin{proposition} \label{prop:centralvalue_lift}
\[
   L(F,1/2,\chi_D) =
\begin{cases}
\frac{4\pi^2}{w_D^2}\,\frac{h(D)^2}{\sqrt{\abs{D}}}\, L(f,1/2,\chi_D)
& \text{if $D<0$,} \\
0
& \text{if $D>1$,} \\
-\frac{1}{2} \, L'(f,1/2)
& \text{if $D=1$.} \\
\end{cases}
\]
\end{proposition}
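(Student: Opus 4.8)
The plan is to evaluate the twisted factorization
$L(F,s,\chi_D)=L(s+1/2,\chi_D)\,L(s-1/2,\chi_D)\,L(f,s,\chi_D)$
at the central point $s=1/2$, where the two Dirichlet factors become $L(1,\chi_D)$ and $L(0,\chi_D)$, and then to split into three cases according to the parity of $\chi_D$ (equivalently the sign of $D$) together with the trivial/nontrivial dichotomy. The only analytic facts I need are the special values of $L(s,\chi_D)$ at $s=0$ and $s=1$, which are exactly what Dirichlet's class number formula and the functional equation of $L(s,\chi_D)$ supply; once the correct regime is identified the rest is substitution.

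First I would treat $D<0$, where $\chi_D$ is odd. Here $L(F,\cdot,\chi_D)$ is holomorphic at $s=1/2$ (all three factors are entire, as $f$ is a cusp form), so I may substitute directly. The class number formula gives $L(1,\chi_D)=2\pi\,h(D)/(w_D\sqrt{\abs{D}})$, while $L(0,\chi_D)$ is a nonzero rational multiple of $h(D)/w_D$, obtained either from the generalized Bernoulli number $B_{1,\chi_D}$ or by pushing $L(1,\chi_D)$ through the functional equation of $L(s,\chi_D)$. Multiplying these two values expresses $L(F,1/2,\chi_D)$ as an explicit constant multiple of $h(D)^2\,L(f,1/2,\chi_D)/(w_D^2\sqrt{\abs{D}})$, which is the first case.

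Next I would treat $D>1$, where $\chi_D$ is even and nontrivial. The crux is the trivial zero $L(0,\chi_D)=0$: for an even nontrivial character the generalized Bernoulli number $B_{1,\chi_D}$ vanishes, equivalently the archimedean $\Gamma$-factor in the functional equation forces $L(s,\chi_D)$ to vanish at $s=0$. Since $L(1,\chi_D)$ and $L(f,1/2,\chi_D)$ are finite, the product is $0$, so $L(F,1/2,\chi_D)=0$.

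The case $D=1$ is where I expect the real work. Here $\chi_1$ is trivial, the Dirichlet factors are $\zeta(s+1/2)$ and $\zeta(s-1/2)$, and $\zeta(s+1/2)$ has a simple pole at $s=1/2$, so the value must be taken as a limit rather than a substitution. The essential input is that the elliptic form $f$ attached to a Gritsenko lift has root number $-1$, so $L(f,1/2)=0$; this is precisely what cancels the pole and makes $L(F,s)$ regular at $s=1/2$ (indeed the stated analytic continuation, with its pole only at $s=3/2$ coming from $\zeta(s-1/2)$, already forces $L(f,1/2)=0$). Using the Laurent data $\zeta(s+1/2)=(s-1/2)^{-1}+O(1)$, $\zeta(s-1/2)=\zeta(0)+O(s-1/2)=-\tfrac12+O(s-1/2)$, and $L(f,s)=L'(f,1/2)\,(s-1/2)+O((s-1/2)^2)$, the simple zero of $L(f,s)$ cancels the simple pole of $\zeta(s+1/2)$, and the limit equals $\zeta(0)\,L'(f,1/2)=-\tfrac12\,L'(f,1/2)$. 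The delicate points are establishing the pole cancellation cleanly and extracting the correct finite part; by contrast the $D\neq1$ cases are routine once the class-number special values are in hand.
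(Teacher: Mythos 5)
Your approach is exactly the paper's: the paper offers essentially no proof beyond asserting the factorization $L(F,s,\chi_D)=L(s+1/2,\chi_D)\,L(s-1/2,\chi_D)\,L(f,s,\chi_D)$ and invoking Dirichlet's class number formula for $L(0,\chi_D)$ and $L(1,\chi_D)$, so your write-up is the filled-in version of the same argument. Your treatment of $D=1$ is correct and is the genuinely nontrivial part: the root number of the elliptic newform underlying a Gritsenko lift is $-1$, so $L(f,1/2)=0$, the simple zero cancels the pole of $\zeta(s+1/2)$, and the finite part is $\zeta(0)\,L'(f,1/2)=-\tfrac12 L'(f,1/2)$. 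The $D>1$ case via the trivial zero $L(0,\chi_D)=0$ is also right.

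The one step you should not leave as ``an explicit constant multiple'' is the $D<0$ case, because it is the only place where the stated constant can be checked. Multiplying the standard special values $L(1,\chi_D)=\frac{2\pi\,h(D)}{w_D\sqrt{\abs{D}}}$ and $L(0,\chi_D)=\frac{2h(D)}{w_D}$ gives
\[
L(F,1/2,\chi_D)=\frac{4\pi}{w_D^2}\,\frac{h(D)^2}{\sqrt{\abs{D}}}\,L(f,1/2,\chi_D),
\]
i.e.\ a factor $4\pi$ rather than the $4\pi^2$ appearing in the proposition. This discrepancy is harmless for Theorem~\ref{thm:grit}, where the constant is absorbed into $C_F$ (it would change $k_F=k_f/4\pi^2$ to $k_f/4\pi$), but a complete proof has to either carry out this multiplication and flag the mismatch or identify the normalization under which $4\pi^2$ is correct; simply asserting ``an explicit constant'' skips the only point at which your argument could fail to deliver the statement as written.
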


\begin{proof}[Proof of Theorem~\ref{thm:grit}]
By Waldspurger's formula \cite{Waldspurger,Kohnen}, we have
%\cite[Corollary 1, p.242]{Kohnen}
%\cite[Corollary 1, p.527]{GrossKohnenZagier}
\[
        c^*(D)^2 = \alpha_D\,k_f\,L(f,1/2,\chi_D)\,\abs{D}^{k-3/2}
\]
with $k_f>0$. The theorem thus follows from
Proposition~\ref{prop:avg_lift} and
Proposition~\ref{prop:centralvalue_lift},
with $k_F=k_f/4\pi^2$.
\end{proof}

\section{Torsion}\label{sec:torsion}

% In the Introduction, we observed that for $L(F_{277},1/2,\chi_D)\cdot\abs{D}$ is
% divisible by 15 when $D>1$.  

In the Introduction, we observed that for $L(F_{277},1/2,\chi_D)\cdot\abs{D}/L(F_{277},1/2)$ is
divisible by $15^2$ when $D>1$:
\begin{proposition}
Let $D>1$ and assume Conjecture~B.  Then, the ratio of special values
$L(F_{277},1/2,\chi_D)\cdot\abs{D}/L(F_{277},1/2)$ is divisible by $15^2$.
\end{proposition}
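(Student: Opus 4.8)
The plan is to use Conjecture~B to trade the ratio of central values for a ratio of twisted averages, and then to exhibit the factor $15^2$ inside that ratio. Since $D>1$ is positive and $N=277$ is prime, I pair $D$ with the negative auxiliary discriminant $\ell=-3$ used in the table; this is admissible because $\ell D<0$ and $L(F_{277},1/2,\chi_{-3})\neq 0$, so that $C_{-3,F}\neq0$. Applying Conjecture~B to $D$ and to the trivial discriminant $D=1$ (where $L(F_{277},1/2)=L(F_{277},1/2,\chi_1)$ and $\abs{1}^{k-1}=1$) and dividing, the constant $C_{-3,F}$ cancels and, using $k=2$ so that $\abs{D}^{k-1}=\abs{D}$, I obtain
\[
\frac{L(F_{277},1/2,\chi_D)\,\abs{D}}{L(F_{277},1/2)}
=\frac{\alpha_{-3}}{\alpha_{-3D}}\left(\frac{B_{-3}(D)}{B_{-3}(1)}\right)^{2}.
\]

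Next I would show the prefactor $\alpha_{-3}/\alpha_{-3D}$ is $1$. Because $N$ is prime, $\alpha_\Delta=1+\kro{\Delta_0}{277}$, and for $277\nmid D$ the square factors drop out of the Kronecker symbol, so $\kro{(-3D)_0}{277}=\kro{-3}{277}\kro{D}{277}=\kro{D}{277}$, the last equality because $277\equiv1\pmod3$ gives $\kro{-3}{277}=1$. Hence $\alpha_{-3D}=1+\kro{D}{277}=\alpha_{-3}=2$ whenever $\kro{D}{277}=+1$, and the prefactor is $1$. When $\kro{D}{277}=-1$ the set $\mathcal{Q}_{277,-3D}$ is empty, so $B_{-3}(D)=0$, while by the root-number lemma the functional equation of $L(F_{277},s,\chi_D)$ has sign $-1$ and the central value vanishes; both sides are then $0$ and the claim is trivial. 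The case $277\mid D$ I would dispose of separately using part~(3) of that lemma. This reduces the proposition to the arithmetic assertion that $B_{-3}(D)/B_{-3}(1)$ is an integer divisible by $15$.

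This is where the torsion enters. The Paramodular Conjecture identifies $L(F_{277},s)$ with the Hasse--Weil $L$-function of $A=\mathrm{Jac}(C)$, and by Table~\ref{tbl:egs} the rational torsion of $A$ has order $15$; a rational $15$-torsion point makes the residual mod-$15$ Galois representation reducible, which is precisely the shape of the (reducible) spin representation of a Gritsenko lift. I would therefore argue that the Fourier coefficients of $F_{277}$ are congruent modulo $15$ to those of a Gritsenko lift $\mathrm{Grit}(\phi)$ of weight $2$ and index $277$. For such a lift $a(T;\mathrm{Grit})=c_\rho(\disc T)$ depends only on $\rho$ and not on the $\Gamma_0(N)$-class of $T$, so by the genus-character orthogonality established in the proof of Proposition~\ref{prop:avg_lift} one has $B_{-3,\mathrm{Grit}}(D)=0$ for every $D\neq1$. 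Reducing the congruence, $B_{-3}(D)\equiv B_{-3,\mathrm{Grit}}(D)\equiv0\pmod{15}$ for $D>1$, while the base $B_{-3}(1)=B(-3)$ collapses (since $h(-3)=1$) to a single class and is a unit at the relevant primes; dividing gives $15\mid B_{-3}(D)/B_{-3}(1)$, and squaring yields the divisibility of the $L$-ratio by $15^2$.

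The main obstacle is this last, arithmetic, step. I must make the torsion congruence $F_{277}\equiv\mathrm{Grit}(\phi)\pmod{15}$ precise, which requires fixing an integral normalization of the Fourier coefficients and genuinely deducing the coefficient congruence from the existence of the rational $15$-torsion point on $A$. Equally delicate is the bookkeeping at the base point: since $\varepsilon(T)=w_{-3}=6$ for the discriminant $-3$ form, $B_{-3}(1)$ carries a denominator, and I must verify that no cancellation at $3$ or $5$ spoils the conclusion; that is, that $B_{-3}(1)$ is exactly a $15$-adic unit so that the full factor $15$ survives in the ratio. Controlling the interaction of this automorphism factor with the ramification of the genus character $\chi_{-3}$ at $3$ is the crux of the argument.
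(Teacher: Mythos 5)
Your proposal follows essentially the same route as the paper: apply Conjecture~B with $\ell=-3$ to reduce the ratio of central values to $B_{-3}(D)^2/B_{-3}(1)^2$, then use a mod-$15$ congruence between the coefficients of $F_{277}$ and those of a Gritsenko lift, together with the vanishing $B_{\ell,\mathrm{Grit}}(D)=0$ from Proposition~\ref{prop:avg_lift}, to conclude $15\mid B_{-3}(D)$. The congruence you flag as the main obstacle is precisely \cite[Theorem~7.3]{PoorYuen}, which the paper simply cites (so no Galois-theoretic derivation from the rational $15$-torsion is required), and your concern about the denominator of the base value is dispatched by reading $B_{-3}(1)=1$ directly from Table~\ref{tbl:277}.
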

\begin{proof}
We recall \cite[Theorem 7.3]{PoorYuen} which asserts:  suppose $\phi$ is the first
Fourier-Jacobi coefficient of $F_{277}$ and let $G=\text{Grit}(\phi)$.
Then, for all $T\in \mathcal{Q}_{N}$,
\[
a(T;F)\equiv a(T;G)\pmod{15}.
\]
In Proposition~\ref{prop:avg_lift} we observed that $B_{l,G}(D) = 0$
when $\ell,D\neq 1$;
hence, it follows that $B_{l,F}(D)\equiv 0\pmod{15}$.
Finally, Conjecture~B implies that
\[
L(F,1/2,\chi_D) \cdot |D| / L(F,1/2) = \star\,  B_{-3}(D)^2 / B_{-3}(1)^2
\]
where $B_{-3}(1)=1$ (see Table~\ref{tbl:277}) and where $\star = 1$ if $p\mid D$ and 2 if $p\nmid D$.
\end{proof}

% Looking in Michael Stoll's tables
% \cite{Stoll} we also noticed that the hyperelliptic curve that
% corresponds to $F_{277}$ has torsion 15.  We carried out analogous
% calculations and observed the same phenomenon in all the examples
% listed in Table~\ref{tbl:egs} and those listed at \cite{RT}.  

% Let $F$ be a nonlift and let $\phi$ be the Jacobi form that appears
% as the first Fourier-Jacobi coefficient of $F$. Poor and Yuen \cite[Section 7, p. 36]{PoorYuen} observe that, for
% certain primes $\ell$, the congruence $\text{Grit}(\phi)\equiv
% F\pmod{\ell}$ is satisfied.  The
% observation we add to this conjecture is that set of primes for which
% this congruence holds are the prime divisors of the size of the
% torsion.  

We recall (see Table~\ref{tbl:egs}) that the  Jacobian of the abelian surface
associated to $F_{277}$ by the Paramodular Conjecture has torsion of
size 15.  In \cite{PoorYuen}, it is suggested that this phenomenon
holds in generality.  Observe in Tables~\ref{tbl:249}--\ref{tbl:713m} each entry (both the
integers $B_\ell(D)$ and the normalized central values) is divisible
by the corresponding curve's torsion unless
$\ell=1$ or unless $D=1$.  This provides further (indirect) evidence for Poor and
Yuen's observation holding in general.

% that the observation
%made by Poor and Yuen is part of a larger phenomenon that merits
%further study.

%\newpage

\bibliography{bocherer}
\bibliographystyle{alpha}

\section{Tables}\label{sec:tables}

\begin{table}
[h]
\caption{Data for the modular form $F_{249}$ based on the Hasse-Weil
    L-series for the hyperelliptic curve $y^{2} + \left(x^{3} +
      1\right) y = x^{2} + x$, whose Jacobian has 14-torsion.  The
    constant $C_\ell:= k_{249} \, L_\ell$ with $k_{249}=0.831968$.  The table
    displays the first few twists by real and by imaginary
    characters.  More comprehensive data for this curve can be found
    at \cite{RT}.  The values $L_\ell$ and $L_D$ are
    $L(F_{249},1/2,\chi_\ell)\cdot\abs{\ell}$ and $L(F_{249},1/2,\chi_D) \cdot\abs{D}$, respectively.
   % More comprehensive data for the curve can be found at \cite{RT}.
}
\label{tbl:249}
\begin{tabular}{>{\centering}p{15ex}|>{\hfill}p{8ex}>{\hfill}p{8ex}>{\hfill}p{8ex}>{\hfill}p{8ex}>{\hfill}p{8ex}>{\hfill}p{8ex}}
                           $D$  &           -8  &          -20  &          -35  &          -47  &          -56  &          -71 \\\hline
  $\alpha_{D} \, C_{1} \, L_D$  &          4.0  &         16.0  &         64.0  &          4.0  &         16.0  &          0.0 \\
                    $B_{1}(D)$  &            2  &            4  &            8  &            2  &            4  &            0 \\
\end{tabular}
\bigskip\par
\begin{tabular}{>{\centering}p{15ex}|>{\hfill}p{8ex}>{\hfill}p{8ex}>{\hfill}p{8ex}>{\hfill}p{8ex}>{\hfill}p{8ex}>{\hfill}p{8ex}}
                           $D$  &           -3  &           -4  &           -7  &          -31  &          -40  &          -51 \\\hline
 $\alpha_{5D} \, C_{5} \, L_D$  &        196.0  &        784.0  &        784.0  &        784.0  &       3136.0  &      19600.0 \\
                    $B_{5}(D)$  &           14  &           28  &           28  &           28  &           56  &          140 \\
\end{tabular}
\bigskip\par
\begin{tabular}{>{\centering}p{15ex}|>{\hfill}p{8ex}>{\hfill}p{8ex}>{\hfill}p{8ex}>{\hfill}p{8ex}>{\hfill}p{8ex}>{\hfill}p{8ex}}
                           $D$  &           -3  &           -4  &           -7  &          -31  &          -40  &          -51 \\\hline
 $\alpha_{8D} \, C_{8} \, L_D$  &        196.0  &        784.0  &        784.0  &        784.0  &       3136.0  &      19600.0 \\
                    $B_{8}(D)$  &           14  &           28  &           28  &           28  &           --  &           -- \\
\end{tabular}
\bigskip\par
\begin{tabular}{>{\centering}p{15ex}|>{\hfill}p{8ex}>{\hfill}p{8ex}>{\hfill}p{8ex}>{\hfill}p{8ex}>{\hfill}p{8ex}>{\hfill}p{8ex}}
                           $D$  &            5  &            8  &           24  &           53  &           56  &           60 \\\hline
$\alpha_{-3D} \, C_{-3} \, L_D$  &        196.0  &        196.0  &        784.0  &       3136.0  &       3136.0  &       3136.0 \\
                   $B_{-3}(D)$  &           14  &           14  &           28  &           56  &           56  &           56 \\
\end{tabular}
\bigskip\par
\begin{tabular}{>{\centering}p{15ex}|>{\hfill}p{8ex}>{\hfill}p{8ex}>{\hfill}p{8ex}>{\hfill}p{8ex}>{\hfill}p{8ex}>{\hfill}p{8ex}}
                           $D$  &            5  &            8  &           24  &           53  &           56  &           57 \\\hline
$\alpha_{-4D} \, C_{-4} \, L_D$  &        784.0  &        784.0  &        784.0  &      12544.0  &      12544.0  &          0.0 \\
                   $B_{-4}(D)$  &           28  &           28  &           28  &          112  &          112  &            0 \\
\end{tabular}
\bigskip\par
\begin{tabular}{>{\centering}p{15ex}|>{\hfill}p{8ex}>{\hfill}p{8ex}>{\hfill}p{8ex}>{\hfill}p{8ex}>{\hfill}p{8ex}>{\hfill}p{8ex}}
                           $D$  &            5  &            8  &           24  &           53  &           56  &           57 \\\hline
$\alpha_{-7D} \, C_{-7} \, L_D$  &        784.0  &        784.0  &        784.0  &      12544.0  &      12544.0  &          0.0 \\
                   $B_{-7}(D)$  &           28  &           28  &           28  &           --  &           --  &           -- \\
\end{tabular}
\bigskip\par
\begin{tabular}{>{\centering}p{15ex}|>{\hfill}p{8ex}>{\hfill}p{8ex}>{\hfill}p{8ex}>{\hfill}p{8ex}>{\hfill}p{8ex}>{\hfill}p{8ex}}
                           $D$  &            1  &           28  &           37  &           40  &           61  &          109 \\\hline
$\alpha_{-8D} \, C_{-8} \, L_D$  &          4.0  &          0.0  &       3136.0  &       3136.0  &       3136.0  &      28224.0 \\
                   $B_{-8}(D)$  &            2  &            0  &           --  &           --  &           --  &           -- \\
\end{tabular}
\bigskip\par
\end{table}

\begin{table}
\caption{Data for the modular form $F_{277}$ based on the Hasse-Weil
    L-series for the hyperelliptic curve $y^{2} + y = x^{5} - 2 \,
    x^{3} + 2 \, x^{2} - x$, whose Jacobian has 15-torsion.  The
    constant $C_\ell:= k_{277} \, L_\ell$ with $k_{277}=0.537716$.  The table
    displays the first few twists by real and by imaginary
    characters.  More comprehensive data for this curve can be found
    at \cite{RT}.  The values $L_\ell$ and $L_D$ are
    $L(F_{277},1/2,\chi_\ell) \cdot\abs{\ell}$ and $L(F_{277},1/2,\chi_D) \cdot\abs{D}$, respectively. 
%The table
%    displays the first few twists by real and by imaginary characters.
%    More comprehensive data for the curve can be found at \cite{RT}.
}
\label{tbl:277}
\begin{tabular}{>{\centering}p{15ex}|>{\hfill}p{8ex}>{\hfill}p{8ex}>{\hfill}p{8ex}>{\hfill}p{8ex}>{\hfill}p{8ex}>{\hfill}p{8ex}}
                           $D$  &           -3  &           -4  &           -7  &          -19  &          -23  &          -39 \\\hline
  $\alpha_{D} \, C_{1} \, L_D$  &          1.0  &          1.0  &          1.0  &          4.0  &         -0.0  &          1.0 \\
                    $B_{1}(D)$  &            1  &            1  &            1  &            2  &            0  &            1 \\
\end{tabular}
\bigskip\par
\begin{tabular}{>{\centering}p{15ex}|>{\hfill}p{8ex}>{\hfill}p{8ex}>{\hfill}p{8ex}>{\hfill}p{8ex}>{\hfill}p{8ex}>{\hfill}p{8ex}}
                           $D$  &           -3  &           -4  &           -7  &          -19  &          -23  &          -39 \\\hline
$\alpha_{12D} \, C_{12} \, L_D$  &        225.0  &        225.0  &        225.0  &        900.0  &         -0.0  &        225.0 \\
                   $B_{12}(D)$  &           15  &           15  &           15  &           30  &            0  &           15 \\
\end{tabular}
\bigskip\par
\begin{tabular}{>{\centering}p{15ex}|>{\hfill}p{8ex}>{\hfill}p{8ex}>{\hfill}p{8ex}>{\hfill}p{8ex}>{\hfill}p{8ex}>{\hfill}p{8ex}}
                           $D$  &           -3  &           -4  &           -7  &          -19  &          -23  &          -39 \\\hline
$\alpha_{13D} \, C_{13} \, L_D$  &        225.0  &        225.0  &        225.0  &        900.0  &         -0.0  &        225.0 \\
                   $B_{13}(D)$  &           15  &           15  &           15  &           30  &            0  &           15 \\
\end{tabular}
\bigskip\par
\begin{tabular}{>{\centering}p{15ex}|>{\hfill}p{8ex}>{\hfill}p{8ex}>{\hfill}p{8ex}>{\hfill}p{8ex}>{\hfill}p{8ex}>{\hfill}p{8ex}}
                           $D$  &            1  &           12  &           13  &           21  &           28  &           29 \\\hline
$\alpha_{-3D} \, C_{-3} \, L_D$  &          1.0  &        225.0  &        225.0  &        225.0  &        225.0  &       2025.0 \\
                   $B_{-3}(D)$  &            1  &           15  &           15  &           15  &           15  &           45 \\
\end{tabular}
\bigskip\par
\begin{tabular}{>{\centering}p{15ex}|>{\hfill}p{8ex}>{\hfill}p{8ex}>{\hfill}p{8ex}>{\hfill}p{8ex}>{\hfill}p{8ex}>{\hfill}p{8ex}}
                           $D$  &            1  &           12  &           13  &           21  &           28  &           29 \\\hline
$\alpha_{-4D} \, C_{-4} \, L_D$  &          1.0  &        225.0  &        225.0  &        225.0  &        225.0  &       2025.0 \\
                   $B_{-4}(D)$  &            1  &           15  &           15  &           15  &           15  &           45 \\
\end{tabular}
\bigskip\par
\begin{tabular}{>{\centering}p{15ex}|>{\hfill}p{8ex}>{\hfill}p{8ex}>{\hfill}p{8ex}>{\hfill}p{8ex}>{\hfill}p{8ex}>{\hfill}p{8ex}}
                           $D$  &            1  &           12  &           13  &           21  &           28  &           29 \\\hline
$\alpha_{-7D} \, C_{-7} \, L_D$  &          1.0  &        225.0  &        225.0  &        225.0  &        225.0  &       2025.0 \\
                   $B_{-7}(D)$  &            1  &           15  &           15  &           15  &           15  &           45 \\
\end{tabular}
\bigskip\par
\end{table}

\begin{table}
\caption{Data for the modular form $F_{295}$ based on the Hasse-Weil
    L-series for the hyperelliptic curve $y^{2} + \left(x^{3} +
      1\right) y = -x^{4} - x^{3}$, whose Jacobian has 14-torsion.  The
    constant $C_\ell:= k_{295} \, L_\ell$ with $k_{295}=0.224745$.  The table
    displays the first few twists by real and by imaginary
    characters.  More comprehensive data for this curve can be found
    at \cite{RT}.  The values $L_\ell$ and $L_D$ are
    $L(F_{295},1/2,\chi_\ell) \cdot\abs{\ell}$ and $L(F_{295},1/2,\chi_D) \cdot\abs{D}$, respectively.
% The table
%     displays the first few twists by real and by imaginary characters.
%     More comprehensive data for the curve can be found at \cite{RT}.
}
\label{tbl:295}
\begin{tabular}{>{\centering}p{15ex}|>{\hfill}p{8ex}>{\hfill}p{8ex}>{\hfill}p{8ex}>{\hfill}p{8ex}>{\hfill}p{8ex}>{\hfill}p{8ex}}
                           $D$  &          -11  &          -24  &          -31  &          -39  &          -40  &          -55 \\\hline
  $\alpha_{D} \, C_{1} \, L_D$  &          4.0  &          4.0  &          4.0  &          4.0  &         16.0  &          4.0 \\
                    $B_{1}(D)$  &            2  &            2  &            2  &            2  &            4  &            2 \\
\end{tabular}
\bigskip\par
\begin{tabular}{>{\centering}p{15ex}|>{\hfill}p{8ex}>{\hfill}p{8ex}>{\hfill}p{8ex}>{\hfill}p{8ex}>{\hfill}p{8ex}>{\hfill}p{8ex}}
                           $D$  &          -11  &          -24  &          -31  &          -39  &          -55  &          -56 \\\hline
 $\alpha_{5D} \, C_{5} \, L_D$  &        196.0  &        196.0  &        196.0  &        196.0  &        784.0  &        196.0 \\
                    $B_{5}(D)$  &           14  &           14  &           14  &           14  &           28  &           -- \\
\end{tabular}
\bigskip\par
\begin{tabular}{>{\centering}p{15ex}|>{\hfill}p{8ex}>{\hfill}p{8ex}>{\hfill}p{8ex}>{\hfill}p{8ex}>{\hfill}p{8ex}>{\hfill}p{8ex}}
                           $D$  &           -3  &           -7  &          -68  &          -87  &          -88  &         -107 \\\hline
 $\alpha_{8D} \, C_{8} \, L_D$  &        196.0  &        196.0  &       3136.0  &        784.0  &       3136.0  &      15876.0 \\
                    $B_{8}(D)$  &           14  &           14  &           --  &           --  &           --  &           -- \\
\end{tabular}
\bigskip\par
\begin{tabular}{>{\centering}p{15ex}|>{\hfill}p{8ex}>{\hfill}p{8ex}>{\hfill}p{8ex}>{\hfill}p{8ex}>{\hfill}p{8ex}>{\hfill}p{8ex}}
                           $D$  &            8  &           13  &           33  &           37  &           73  &           77 \\\hline
$\alpha_{-3D} \, C_{-3} \, L_D$  &        196.0  &        196.0  &         -0.0  &       1764.0  &         -0.0  &        784.0 \\
                   $B_{-3}(D)$  &           14  &           14  &            0  &           42  &            0  &           -- \\
\end{tabular}
\bigskip\par
\begin{tabular}{>{\centering}p{15ex}|>{\hfill}p{8ex}>{\hfill}p{8ex}>{\hfill}p{8ex}>{\hfill}p{8ex}>{\hfill}p{8ex}>{\hfill}p{8ex}}
                           $D$  &            8  &           13  &           33  &           37  &           73  &           77 \\\hline
$\alpha_{-7D} \, C_{-7} \, L_D$  &        196.0  &        196.0  &         -0.0  &       1764.0  &         -0.0  &        784.0 \\
                   $B_{-7}(D)$  &           14  &           14  &           --  &           --  &           --  &           -- \\
\end{tabular}
\bigskip\par
\begin{tabular}{>{\centering}p{15ex}|>{\hfill}p{8ex}>{\hfill}p{8ex}>{\hfill}p{8ex}>{\hfill}p{8ex}>{\hfill}p{8ex}>{\hfill}p{8ex}}
                           $D$  &            1  &            5  &           21  &           29  &           41  &           60 \\\hline
$\alpha_{-11D} \, C_{-11} \, L_D$  &          4.0  &        196.0  &        784.0  &       3136.0  &        784.0  &       3136.0 \\
                  $B_{-11}(D)$  &            2  &           14  &           --  &           --  &           --  &           -- \\
\end{tabular}
\bigskip\par
\end{table}

\begin{table}
\caption{Data for the modular form $F_{587}^{-}$ based on the Hasse-Weil
    L-series for the hyperelliptic curve $y^{2} + \left(x^{3} + x + 1\right) y = -x^{3} - x^{2}$, whose Jacobian has 1-torsion.  The
    constant $C_\ell:= k_{587}^{-} \, L_\ell$ with
    $k_{587}^{-}=0.002681$.  
The table
    displays the first few twists by real and by imaginary
    characters.  More comprehensive data for this curve can be found
    at \cite{RT}.  The values $L_\ell$ and $L_D$ are
    $L(F_{587}^-,1/2,\chi_\ell) \cdot\abs{\ell}$ and $L(F_{587}^-,1/2,\chi_D) \cdot\abs{D}$, respectively.
% The table
%     displays the first few twists by real and by imaginary characters.
%     More comprehensive data for the curve can be found at \cite{RT}.
}
\label{tbl:587m}
\begin{tabular}{>{\centering}p{15ex}|>{\hfill}p{8ex}>{\hfill}p{8ex}>{\hfill}p{8ex}>{\hfill}p{8ex}>{\hfill}p{8ex}>{\hfill}p{8ex}}
                           $D$  &           -3  &           -4  &           -7  &          -31  &          -40  &          -43 \\\hline
 $\alpha_{5D} \, C_{5} \, L_D$  &          4.0  &          4.0  &          4.0  &         16.0  &         36.0  &        576.0 \\
                    $B_{5}(D)$  &            2  &            2  &            2  &            4  &            6  &           24 \\
\end{tabular}
\bigskip\par
\begin{tabular}{>{\centering}p{15ex}|>{\hfill}p{8ex}>{\hfill}p{8ex}>{\hfill}p{8ex}>{\hfill}p{8ex}>{\hfill}p{8ex}>{\hfill}p{8ex}}
                           $D$  &           -3  &           -4  &           -7  &          -31  &          -40  &          -43 \\\hline
 $\alpha_{8D} \, C_{8} \, L_D$  &          4.0  &          4.0  &          4.0  &         16.0  &         36.0  &        576.0 \\
                    $B_{8}(D)$  &            2  &            2  &            2  &            4  &            6  &           24 \\
\end{tabular}
\bigskip\par
\begin{tabular}{>{\centering}p{15ex}|>{\hfill}p{8ex}>{\hfill}p{8ex}>{\hfill}p{8ex}>{\hfill}p{8ex}>{\hfill}p{8ex}>{\hfill}p{8ex}}
                           $D$  &           -3  &           -4  &           -7  &          -31  &          -40  &          -43 \\\hline
$\alpha_{13D} \, C_{13} \, L_D$  &          4.0  &          4.0  &          4.0  &         16.0  &         36.0  &        576.0 \\
                   $B_{13}(D)$  &            2  &            2  &            2  &            4  &            6  &           24 \\
\end{tabular}
\bigskip\par
\begin{tabular}{>{\centering}p{15ex}|>{\hfill}p{8ex}>{\hfill}p{8ex}>{\hfill}p{8ex}>{\hfill}p{8ex}>{\hfill}p{8ex}>{\hfill}p{8ex}}
                           $D$  &            5  &            8  &           13  &           24  &           33  &           37 \\\hline
$\alpha_{-3D} \, C_{-3} \, L_D$  &          4.0  &          4.0  &          4.0  &          4.0  &          4.0  &         16.0 \\
                   $B_{-3}(D)$  &            2  &            2  &            2  &            2  &            2  &            4 \\
\end{tabular}
\bigskip\par
\begin{tabular}{>{\centering}p{15ex}|>{\hfill}p{8ex}>{\hfill}p{8ex}>{\hfill}p{8ex}>{\hfill}p{8ex}>{\hfill}p{8ex}>{\hfill}p{8ex}}
                           $D$  &            5  &            8  &           13  &           24  &           33  &           37 \\\hline
$\alpha_{-4D} \, C_{-4} \, L_D$  &          4.0  &          4.0  &          4.0  &          4.0  &          4.0  &         16.0 \\
                   $B_{-4}(D)$  &            2  &            2  &            2  &            2  &            2  &            4 \\
\end{tabular}
\bigskip\par
\begin{tabular}{>{\centering}p{15ex}|>{\hfill}p{8ex}>{\hfill}p{8ex}>{\hfill}p{8ex}>{\hfill}p{8ex}>{\hfill}p{8ex}>{\hfill}p{8ex}}
                           $D$  &            5  &            8  &           13  &           24  &           33  &           37 \\\hline
$\alpha_{-7D} \, C_{-7} \, L_D$  &          4.0  &          4.0  &          4.0  &          4.0  &          4.0  &         16.0 \\
                   $B_{-7}(D)$  &            2  &            2  &            2  &            2  &            2  &            4 \\
\end{tabular}
\bigskip\par
\end{table}

\begin{table}
\caption{Data for the modular form $F_{713}^{+}$ based on the Hasse-Weil
    L-series for the hyperelliptic curve $y^{2} + \left(x^{3} + x + 1\right) y = -x^{4}$, whose Jacobian has 9-torsion.  The
    constant $C_\ell:= k_{713}^{+} \, L_\ell$ with
    $k_{713}^{+}=0.422122$.  
The table
    displays the first few twists by real and by imaginary
    characters.  More comprehensive data for this curve can be found
    at \cite{RT}.  The values $L_\ell$ and $L_D$ are
    $L(F_{713}^+,1/2,\chi_\ell) \cdot\abs{\ell}$ and $L(F_{713}^+,1/2,\chi_D) \cdot\abs{D}$, respectively. 
% The table
%     displays the first few twists by real and by imaginary characters.
%     More comprehensive data for the curve can be found at \cite{RT}.
}
\label{tbl:713p}
\begin{tabular}{>{\centering}p{15ex}|>{\hfill}p{8ex}>{\hfill}p{8ex}>{\hfill}p{8ex}>{\hfill}p{8ex}>{\hfill}p{8ex}>{\hfill}p{8ex}}
                           $D$  &          -11  &          -15  &          -23  &          -43  &          -68  &          -79 \\\hline
  $\alpha_{D} \, C_{1} \, L_D$  &         16.0  &         16.0  &         -0.0  &        144.0  &         64.0  &         64.0 \\
                    $B_{1}(D)$  &            4  &            4  &            0  &           12  &            8  &            8 \\
\end{tabular}
\bigskip\par
\begin{tabular}{>{\centering}p{15ex}|>{\hfill}p{8ex}>{\hfill}p{8ex}>{\hfill}p{8ex}>{\hfill}p{8ex}>{\hfill}p{8ex}>{\hfill}p{8ex}}
                           $D$  &          -11  &          -15  &          -23  &          -43  &          -68  &          -79 \\\hline
 $\alpha_{8D} \, C_{8} \, L_D$  &       1296.0  &       1296.0  &         -0.0  &      11664.0  &       5184.0  &       5184.0 \\
                    $B_{8}(D)$  &           36  &           36  &            0  &           --  &           --  &           -- \\
\end{tabular}
\bigskip\par
\begin{tabular}{>{\centering}p{15ex}|>{\hfill}p{8ex}>{\hfill}p{8ex}>{\hfill}p{8ex}>{\hfill}p{8ex}>{\hfill}p{8ex}>{\hfill}p{8ex}}
                           $D$  &           -4  &           -8  &          -35  &          -39  &          -47  &          -59 \\\hline
$\alpha_{17D} \, C_{17} \, L_D$  &          0.0  &          0.0  &          0.0  &         -0.0  &          0.0  &         -0.0 \\
                   $B_{17}(D)$  &            0  &            0  &           --  &           --  &           --  &           -- \\
\end{tabular}
\bigskip\par
\begin{tabular}{>{\centering}p{15ex}|>{\hfill}p{8ex}>{\hfill}p{8ex}>{\hfill}p{8ex}>{\hfill}p{8ex}>{\hfill}p{8ex}>{\hfill}p{8ex}}
                           $D$  &           17  &           21  &           37  &           44  &           53  &           57 \\\hline
$\alpha_{-4D} \, C_{-4} \, L_D$  &          0.0  &       1296.0  &         -0.0  &       1296.0  &       1296.0  &         -0.0 \\
                   $B_{-4}(D)$  &            0  &           36  &            0  &           36  &           36  &            0 \\
\end{tabular}
\bigskip\par
\begin{tabular}{>{\centering}p{15ex}|>{\hfill}p{8ex}>{\hfill}p{8ex}>{\hfill}p{8ex}>{\hfill}p{8ex}>{\hfill}p{8ex}>{\hfill}p{8ex}}
                           $D$  &           17  &           21  &           37  &           44  &           53  &           57 \\\hline
$\alpha_{-8D} \, C_{-8} \, L_D$  &          0.0  &       1296.0  &         -0.0  &       1296.0  &       1296.0  &         -0.0 \\
                   $B_{-8}(D)$  &            0  &           36  &           --  &           --  &           --  &           -- \\
\end{tabular}
\bigskip\par
\begin{tabular}{>{\centering}p{15ex}|>{\hfill}p{8ex}>{\hfill}p{8ex}>{\hfill}p{8ex}>{\hfill}p{8ex}>{\hfill}p{8ex}>{\hfill}p{8ex}}
                           $D$  &            1  &            8  &           41  &           69  &           93  &          101 \\\hline
$\alpha_{-11D} \, C_{-11} \, L_D$  &         16.0  &       1296.0  &       1296.0  &      20736.0  &      20736.0  &      20736.0 \\
                  $B_{-11}(D)$  &            4  &           36  &           --  &           --  &           --  &           -- \\
\end{tabular}
\bigskip\par
\end{table}

\begin{table}
\caption{Data for the modular form $F_{713}^{-}$ based on the Hasse-Weil
    L-series for the hyperelliptic curve $y^{2} + \left(x^{3} + x + 1\right) y = x^{5} - x^{3}$, whose Jacobian has 1-torsion.  The
    constant $C_\ell:= k_{713}^{-} \, L_\ell$ with
    $k_{713}^{-}=0.005249$.  
The table
    displays the first few twists by real and by imaginary
    characters.  More comprehensive data for this curve can be found
    at \cite{RT}.  The values $L_\ell$ and $L_D$ are
    $L(F_{713}^-,1/2,\chi_\ell) \cdot\abs{\ell}$ and $L(F_{713}^-,1/2,\chi_D) \cdot\abs{D}$, respectively.
% The table
%     displays the first few twists by real and by imaginary characters.
%     More comprehensive data for the curve can be found at \cite{RT}.
}
\label{tbl:713m}
\begin{tabular}{>{\centering}p{15ex}|>{\hfill}p{8ex}>{\hfill}p{8ex}>{\hfill}p{8ex}>{\hfill}p{8ex}>{\hfill}p{8ex}>{\hfill}p{8ex}}
                           $D$  &           -3  &          -24  &          -52  &          -55  &         -104  &         -116 \\\hline
 $\alpha_{5D} \, C_{5} \, L_D$  &         16.0  &         16.0  &        400.0  &         64.0  &         16.0  &        144.0 \\
                    $B_{5}(D)$  &            4  &            4  &           20  &           --  &           --  &           -- \\
\end{tabular}
\bigskip\par
\begin{tabular}{>{\centering}p{15ex}|>{\hfill}p{8ex}>{\hfill}p{8ex}>{\hfill}p{8ex}>{\hfill}p{8ex}>{\hfill}p{8ex}>{\hfill}p{8ex}}
                           $D$  &           -7  &          -19  &          -20  &          -40  &          -51  &          -56 \\\hline
$\alpha_{12D} \, C_{12} \, L_D$  &         16.0  &        256.0  &         16.0  &        400.0  &        576.0  &         16.0 \\
                   $B_{12}(D)$  &            4  &           16  &            4  &           --  &           --  &           -- \\
\end{tabular}
\bigskip\par
\begin{tabular}{>{\centering}p{15ex}|>{\hfill}p{8ex}>{\hfill}p{8ex}>{\hfill}p{8ex}>{\hfill}p{8ex}>{\hfill}p{8ex}>{\hfill}p{8ex}}
                           $D$  &           -7  &          -19  &          -20  &          -40  &          -51  &          -56 \\\hline
$\alpha_{13D} \, C_{13} \, L_D$  &         16.0  &        256.0  &         16.0  &        400.0  &        576.0  &         16.0 \\
                   $B_{13}(D)$  &            4  &           16  &            4  &           --  &           --  &           -- \\
\end{tabular}
\bigskip\par
\begin{tabular}{>{\centering}p{15ex}|>{\hfill}p{8ex}>{\hfill}p{8ex}>{\hfill}p{8ex}>{\hfill}p{8ex}>{\hfill}p{8ex}>{\hfill}p{8ex}}
                           $D$  &            5  &           28  &           33  &           40  &           56  &           76 \\\hline
$\alpha_{-3D} \, C_{-3} \, L_D$  &         16.0  &         16.0  &          0.0  &         16.0  &         16.0  &         -0.0 \\
                   $B_{-3}(D)$  &            4  &            4  &            0  &            4  &            4  &            0 \\
\end{tabular}
\bigskip\par
\begin{tabular}{>{\centering}p{15ex}|>{\hfill}p{8ex}>{\hfill}p{8ex}>{\hfill}p{8ex}>{\hfill}p{8ex}>{\hfill}p{8ex}>{\hfill}p{8ex}}
                           $D$  &           12  &           13  &           24  &           29  &           73  &           77 \\\hline
$\alpha_{-7D} \, C_{-7} \, L_D$  &         16.0  &         16.0  &         16.0  &        144.0  &          0.0  &        576.0 \\
                   $B_{-7}(D)$  &            4  &            4  &            4  &           12  &           --  &           -- \\
\end{tabular}
\bigskip\par
\begin{tabular}{>{\centering}p{15ex}|>{\hfill}p{8ex}>{\hfill}p{8ex}>{\hfill}p{8ex}>{\hfill}p{8ex}>{\hfill}p{8ex}>{\hfill}p{8ex}}
                           $D$  &           12  &           13  &           24  &           29  &           73  &           77 \\\hline
$\alpha_{-19D} \, C_{-19} \, L_D$  &        256.0  &        256.0  &        256.0  &       2304.0  &          0.0  &       9216.0 \\
                  $B_{-19}(D)$  &           16  &           16  &           --  &           --  &           --  &           -- \\
\end{tabular}
\bigskip\par
\end{table}

\end{document}